\newcommand{\new}[1]{{\color{blue} #1}}
\DeclareMathOperator{\trace}{tr}
\newcommand\oprocendsymbol{\hbox{$\bullet$}}
\newcommand\oprocend{\relax\ifmmode\else\unskip\hfill\fi\oprocendsymbol}
\newcommand{\abs}[1]{\ensuremath{| #1 |}}
\newcommand{\norm}[1]{\ensuremath{\| #1 \|}}
\let\leq\leqslant
\let\geq\geqslant
\newcommand{\R}{\mathbb R}
\newcommand{\Z}{\mathbb Z}
\newcommand{\calH}{\ensuremath{\mathcal{H}}}
\newcommand{\calL}{\ensuremath{\mathcal{L}}}
\newcommand{\calM}{\ensuremath{\mathcal{M}}}
\newcommand{\calN}{\ensuremath{\mathcal{N}}}
\newcommand{\calP}{\ensuremath{\mathcal{P}}}
\newcommand{\calS}{\ensuremath{\mathcal{S}}}
\newcommand{\calZ}{\ensuremath{\mathcal{Z}}}
\newtheorem{theorem}{Theorem}[section]
\newtheorem{lemma}[theorem]{Lemma}
\newtheorem{corollary}[theorem]{Corollary} \theoremstyle{remark}
\newtheorem{remark}[theorem]{Remark} \theoremstyle{definition}
\newtheorem{assumption}{Assumption}
\newtheorem{definition}[theorem]{Definition}
\newtheorem{example}[theorem]{Example} 
\title{When sampling works in data-driven control: Informativity for stabilization
  in continuous time}
\author{Jaap Eising \qquad Jorge Cort\'{e}s\thanks{
    J. Eising is with the Department of Information Technology and Electrical
    Engineering at ETH Z\"{u}rich, Switzerland,
    \texttt{jeising@ethz.ch}. J. Cort\'es is with the Department of Mechanical
    and Aerospace Engineering, UC San
    Diego. \texttt{cortes@ucsd.edu}}}
\begin{document}

\maketitle

\begin{abstract}
  This paper introduces a notion of data informativity for
  stabilization tailored to continuous-time signals and systems. We
  establish results comparable to those known for discrete-time
  systems with sampled data.
  We justify that additional assumptions on the properties of the
  noise signals are needed to understand when sampled versions of
  continuous-time signals are informative for stabilization,
  thereby introducing the notions of square Lipschitzness and total
  bounded variation. This allows us to connect the continuous and
  discrete domains, yielding sufficient conditions to synthesize a
  stabilizing controller for the true continuous-time system on the
  basis of sampled data. Simulations illustrate our results.
\end{abstract}

\vspace{-1.3em}\section{Introduction}


\vspace{-0.3em}Data-driven control has emerged as an appealing way of combining the
use of data with solid theoretical principles from systems theory to
synthesize controllers for unknown systems on the basis of
measurements. The development of `one-shot' controller design methods
in particular has attracted significant interest, where data is
directly employed for design without an intermediate system
identification step. Owing to the discrete-time nature of sampled
data, most of this progress has been for systems operating in
discrete time. However, systems that evolve in continuous time are
widespread across engineering disciplines due to the physical nature
of real-world phenomena. Often times, such systems are interconnected
with digital controllers that operate in discrete time. In the context
of data-driven control, understanding the interface between the
continuous and digital domains is particularly relevant as
measurements come often in the form of samples. The goal of this
paper is to understand to what extent continuous-time data and its 
samples are informative enough to ensure stabilizability of an
unknown plant evolving in continuous time.


\emph{Literature review:} Data-driven control has been particularly
fruitful for linear systems, where the notion of persistency of
excitation and specifically Willems' fundamental
lemma~\cite{JCW-PR-IM-BLMDM:05} have allowed users to express any finite
length trajectory in terms of sufficiently informative measurements.
This has proven useful in a range of problems, including
simulation~\cite{IM-PR:08}, linear feedback design \cite{CDP-PT:20},
predictive control \cite{JC-JL-FD:19}, and optimal control
laws~\cite{GRGBDS-SA-CL-LC:18,GB-VK-FP:19}. Aligned with this body of
work, the informativity approach to data-driven control introduced
in~\cite{HJVW-JE-HLT-MKC:20,HJVW-MKC-JE-HLT:22} considers measurements
that do not contain enough information to obtain a unique system. By
making assumptions on the model class and noise model, this approach
explicitly determines the set of all systems consistent with
the measurements, thereby enabling the certification of desirable
properties (e.g., stabilizability) for the measured system.  Most of
the aforementioned works deal with discrete-time systems, and
correspondingly with measurements consisting of sequences of states
and inputs. To our knowledge, the only works dealing with
continuous-time systems do so on the basis of discretized
measurements, see
e.g.,~\cite{MB-SG-JC:22,JM-MS-22,AB-CDP-PT:20,JB-SW-MH-FA:2021}.
In line with this, \cite{VGL-MAM:22} derives a variant of Willems' lemma
for continuous-time systems on the basis of samples. Moreover, many real-world
phenomena take place in continuous time and as such, the examples of
\cite{JB-CWS-FA:20,HJVW-JE-MKC-HLT:22,HJVW-CDP-MKC-PT:20,CDP-PT:21}
are found by discretizing a continuous-time system.

\emph{Statement of contributions:} We deal with the model class of
continuous-time linear systems and investigate the informativity of
data for stabilization.  First, we provide conditions for
stabilizability with measurements in the form of continuous-time
trajectories and noise models given in terms of integrals of the noise
signal. Complementarily, we also derive conditions of when
  samples of these signals are informative for continuous-time
  stabilization. To keep the focus on the effect of noise, we refrain
  from considering the problem of approximating the derivative from
  samples of the state, and assume that samples of this derivative are
  given. Through an example, we show how no connection between the
two notions can be established without additional assumptions on the
noise model, motivating our consideration of square Lipschitzness and
bounded total square variation noise models. These notions allow us to
establish several connections between the continuous and discrete
domains, culminating in sufficient conditions for the synthesis of a
stabilizing controller for the true continuous-time system on the
basis of sampled data. Finally, we study the role of the sampling
stepsize, provide a bound on it to guarantee the informativity of the
sampled data and a criterion that enables us to remove a portion of
the measurements without losing informativity. Simulations illustrate
our results.

\vspace{-1em}
\section{Problem formulation}\label{sec:problem}
\vspace{-0.4em}Consider\footnote{We denote by $\Z_{>0}$ and $\R$ the set of positive
  integer and real numbers, resp.
  For a vector $v\in\R^n$ and a matrix $A\in\R^{n\times n}$,
  $\norm{v}$ and $\norm{A}$ denote the Euclidean norm and induced
  Euclidean norm, resp. The Moore-Penrose pseudo-inverse of $A$ is
  denoted $A^\dagger$. We let $I_n$ denote the $n \times n$ identity
  matrix.
  A property holds for almost all $t\in[0,T]$ if the set for which the
  property does not hold has Lebesgue measure $0$.  A function
  $z:[0,T]\rightarrow \R^n$ is $L$-Lipschitz if
  $\norm{z(t_1)-z(t_2)} \leq L \abs{t_1-t_2}$ for $t_1,t_2 \in
  [0,T]$. If $z$ is differentiable, this is equivalent to
  $\norm{z'(t)} \leq L$ for all $t \in [0,T]$.  $z$ is absolutely
  continuous if there is an integrable function
  $\hat{z}:[0,T]\rightarrow \R^n$ such that $
  z(t) = z(0) + \int_0^t \hat{z}(\tau) d\tau.$
Note that this means that $z$ has a derivative $\hat{z}$ almost
everywhere. We denote the set of square-integrable functions
by~$\calL_2$.}  the continuous-time system
\vspace{-0.3em}\begin{equation}\label{eq:system}
  \dot{{x}}(t) = A_s {x}(t)+ B_s
  {u}(t)+ {w}(t),
\end{equation}
where $x(t)\in\R^n$ is the state, ${u}(t)\in\R^m$ is the input, and
${w}(t) \in \R^n$ is a disturbance. Here $A_s:\R^n\rightarrow \R^n$
and $B_s:\R^m\rightarrow\R^n$ are unknown linear maps, and the
sub-index $s$ is used to denote the true \emph{system} matrices.
Given a finite-time horizon $T>0$, we are interested in
\textit{absolutely continuous} state trajectories ${x}$ of~\eqref{eq:system} on the interval~$[0,T]$.

Since $A_s$ and $B_s$ are unknown, we take an approach similar to
data-driven approaches to determine properties of the system and
control it.  We consider continuous-time measurements over the
interval $[0,T]$. Specifically, we consider measured state
$x:[0,T]\rightarrow\R^n$ and input $u:[0,T] \rightarrow \R^m$
trajectories.  We assume that the associated disturbance
$w:[0,T]\rightarrow\R^n$ satisfies a \textit{noise model}, denoted
$\Delta$, defined as follows: for $0 \le Q\in\R^{n\times n}$,
$w\in\Delta$ if and only if
\vspace{-0.3em}\begin{equation}\label{eq:noise model} \int_{0}^T
  w(t)w(t)^\top dt \leq Q.
\end{equation}
Taking the trace of both sides, we see that~\eqref{eq:noise model}
implies $ \int_{0}^T w(t)^\top w(t) dt \leq \trace(Q)$, and therefore
$\Delta\subseteq\calL_2$.

This noise model captures the behavior of common assumptions on noise
signals. For instance, if for almost all $t\in[0,T]$,
\begin{equation}\label{eq:pointwise noise}
  w(t)w(t)^\top \leq
  \tfrac{1}{T} Q,
\end{equation}
then \eqref{eq:noise model} holds.
If we assume a bound on the norm of the values of the
disturbance signal, this can be brought into this form by noting
$w(t)^\top w(t) \leq k$ if and only if $w(t)w(t)^\top \leq kI_n$.
We make the following assumption on the measurements.

\vspace{-0.4em}\begin{assumption}[Well-behavedness of the
  measurements]\label{ass:state-input}
  The measurement signals $x:[0,T]\rightarrow\R^n$,
  $u:[0,T]\rightarrow\R^m$ satisfy
  \begin{itemize}
  \item The state signal $x$ is absolutely continuous;
  \item The input signal $u$ is square integrable;
  \item The corresponding noise signal
    $w:[0,T]\rightarrow\R^n$ belongs to $\Delta$ as defined
    by~\eqref{eq:noise model};
  \item The triplet $(x,u,w)$ satisfies~\eqref{eq:system} for almost
    all $t\in [0,T]$.
  \end{itemize}
\end{assumption}

This assumption is mild but necessary for our ensuing analysis.  Since
$x$ is absolutely continuous on the compact interval $[0,T]$, it is
bounded. As a consequence, $x \in \calL_2$. This, together with the
fact that~\eqref{eq:system} holds almost everywhere and
$\Delta\subseteq\calL_2$, implies that $\dot{x}\in \calL_2$ too.

Underlying the informativity approach is the observation that, on the
basis of measurements, one can only conclude a property of interest of
the true system $(A_s,B_s)$ if \textit{all} systems compatible with
the measurements have such property. As such, we consider the set of
all systems compatible with the measurement and noise model as defined
by
\vspace{-0.3em}\[
  \Sigma = \{ (A,B) \in \R^{n\times n} \times
  \R^{n\times m} \mid \dot{x}-Ax-Bu \in \Delta\}.
\]
We are interested in finding a stabilizing controller for $(A_s,B_s)$
on the basis of the measurements $x$ and $u$. This leads to the
following notion.

\vspace{-0.4em}\begin{definition}[Informativity of continuous-time data for quadratic
  stabilization]\label{def:inf ctqs}
  Data consisting of state $x:[0,T]\rightarrow\R^n$ and input
  $u:[0,T]\rightarrow\R^m$ trajectories are
  \textit{informative for quadratic stabilization} if and only if
  there exists $K\in\R^{m\times n}$ and
  $P\in \R^{n\times n}$ such that $P>0$ and for all $(A,B)\in \Sigma$:
\vspace{-0.2em} \begin{equation}\label{eq:Lyap}
    (A+BK)P+P(A+BK)^\top <0.\vspace{-0.2em}
  \end{equation}
\end{definition}

Our first objective is to provide necessary and sufficient conditions
on the data $(x,u)$ which ensure this notion of informativity is
satisfied. Our second objective seeks to understand when sampled
versions of the continuous-time data remain informative enough for
stabilization. To formalize this objective, assume we have access to
samples of the signals $x$ and $u$ at a number of discrete time-instants. 
We assume that the stepsize $\delta$ is a whole fraction of
the time horizon\footnote{The choice of a uniform stepsize makes the
  notation simpler, but our results can be easily adapted to deal with
  more general sampling schemes.}, that is,
$\tfrac{T}{\delta}\in\Z_{>0}$, which means that we consider samples at
time instances
$\{t_k = k \delta \}_{k=0}^{{T}/{\delta}-1} \subset [0,T]$.  We
collect the measurements and samples of the noise signal into matrices
\begin{subequations}\label{eq:matrices}
  \begin{align}
    \hspace{-1em}\dot{X}_\delta &= \begin{bmatrix} \dot{x}(0) & \!\!\!\cdots\!\!\! &
      \dot{x}(T-\delta)\end{bmatrix}\!\!,
    &\!\!\!
    X_\delta &= \begin{bmatrix} x(0) & \!\!\!\cdots\!\!\! &
      x(T-\delta)\end{bmatrix}\!\!,
    \\ 
    \hspace{-1em}U_\delta &= \begin{bmatrix} u(0) & \!\!\!\cdots\!\!\! &
      u(T-\delta)\end{bmatrix}\!\!,
    &\!\!\!
    W_\delta &= \begin{bmatrix} w(0)\! & \!\!\!\cdots\!\!\! & w(T-\delta)\end{bmatrix}\!\!.
  \end{align}
\end{subequations}

  \begin{remark}[Samples of the derivative] We focus here on (the
    effect of noise on) the difference in informativity of continuous
    signals and their discretizations. To investigate this, we assume
    that we have access to samples of the derivative~$\dot{x}$, which
    is a common assumption in the literature, see
    e.g.~\cite{MB-SG-JC:22,JM-MS-22,AB-CDP-PT:20,JB-SW-MH-FA:2021}.
    In reality, discrete measurements of this signal are seldom 
    available, which requires estimation from the
    samples of~$x$. However, taking into account such estimation errors,
    while important, is outside of the scope of this paper. \oprocend
  \end{remark}

We assume that $\dot{X}_\delta$, $X_\delta$, and $U_\delta$ are known, but
the samples of the noise, collected in the matrix $W_\delta$, are
unknown. However, we assume $W_\delta$ satisfies some noise model
$\Delta_{\textrm{disc}}$. In particular, as a special case of noise
models considered in the discrete-time informativity
literature~\cite{HJVW-MKC-JE-HLT:22}, we assume that for some
$0 \le Q\in\R^{n\times n}$, $W_\delta \in \Delta_{\textrm{disc}}$ if
and only if
\begin{equation}\label{eq:noise disc}
  \delta W_\delta W_\delta^\top
  \new{=\sum_{k=0}^{{T}/{\delta}-1}  \delta w(k\delta)w(k\delta))^\top }
  \leq Q.
\end{equation}
Note this holds for example if \eqref{eq:pointwise noise} is satisfied for all
$t=k\delta$, where $k=0,\ldots , \sfrac{T}{\delta}-1$.  On the basis
of the samples, we seek to find a stabilizing controller for all
systems in the set
\[
  \Sigma^\delta = \{ (A,B)\in\R^{n\times n}\times \R^{n\times m} \mid
  \dot{X}_\delta -AX_\delta-BU_\delta\in\Delta_{\textrm{disc}} \}.
\]
Our second objective can then be formalized as: provide conditions on
the continuous-time measurements under which we can compare
stabilizability properties of $\Sigma$ and~$\Sigma^\delta$. We focus
on understanding when the continuous-time measurements $(x,u)$ are
informative for quadratic stabilization on the basis of sampled data
and on the stepsizes that make this happen.

\vspace{-0.8em}\section{Data informativity in continuous time}

Here we provide characterizations for when data, either in the form of
continuous-time trajectories or sampled versions of it, is informative
for continuous-time stabilization.
\vspace{-0.8em}\subsection{Informativity with continuous-time data}\label{sec:ct}
Here we address the first objective laid out in
Section~\ref{sec:problem} and characterize when continuous-time data
is informative for stabilization.  We start by observing that the set
$\Sigma$ of systems compatible with the data can be defined via a
Quadratic Matrix Inequality (QMI). Formally, consider measurements $x$
and $u$ satisfying Assumption~\ref{ass:state-input}, with noise
model~\eqref{eq:noise model}.  For
$N=N^\top\in \R^{(2n+m)\times (2n+m)}$, let
\vspace{-0.3em}\[
\calZ(N) := \left\lbrace (A,B) \mid
\begin{bmatrix}I_n &A& B\end{bmatrix}
N
\begin{bmatrix}I_n &A& B\end{bmatrix}^\top
\geq 0 \right\rbrace.
\]
Then, one has $\Sigma= \calZ(N_{\textup{cont}}(Q))$, where
\vspace{-0.3em}\begin{equation}\label{eq:N noise}
  N_{\textup{cont}}(Q)
  :=\begin{bmatrix} Q &0 & 0\\0&0&0\\0&0&0 \end{bmatrix}
  -\int_0^T \begin{pmatrix} \dot{x}(t) \\
    -x(t)\\-u(t)\end{pmatrix}\begin{pmatrix} \dot{x}(t) \\
    -x(t)\\-u(t)\end{pmatrix}^{\!\!\top} \hspace*{-1ex} dt.
\end{equation}
On the other hand, the stability condition~\eqref{eq:Lyap} is equivalent to
\begin{equation}\label{eq:stab informativity}
  \begin{bmatrix} I_n
    \\A^\top \\ B^\top \end{bmatrix}^\top\begin{bmatrix} 0 & -P&
    -PK^\top \\ -P & 0 &0 \\ -KP &0 &0 \end{bmatrix}\begin{bmatrix} I_n
    \\A^\top \\ B^\top \end{bmatrix}>0 ,
\end{equation}
for all $(A,B) \in \Sigma$. Now, both \eqref{eq:stab
    informativity} and $\Sigma$ are given as solution sets of
  QMI's. We can therefore rephrase that the data is informative for
  quadratic stabilization (Definition~\ref{def:inf ctqs}) if all
  $(A,B)$ which satisfy the QMI determined by~\eqref{eq:N noise} also
  satisfy the QMI given by~\eqref{eq:stab informativity}. Such an
  inclusion can be resolved efficiently, as stated in the next
  result.

\begin{theorem}[Necessary and sufficient conditions for informativity
  of continuous-time data]\label{thm:cont stab}
  Suppose that the state and input trajectories
  $x:[0,T]\rightarrow\R^n$ and
  $u:[0,T]\rightarrow\R^m$ satisfy
  Assumption~\ref{ass:state-input}. Then the data $(x,u)$ are
  informative for quadratic stabilization if and only if there exists
  $K\in\R^{m\times n}$, $P\in \R^{n\times n}$, and
  $\beta>0$ such that $P>0$ and
  \begin{equation}\label{eq:LMI stab}
    -\!\!\begin{bmatrix} Q+\beta
      I_n \!\!\!& \!\!P\!\!& \!PK^\top \\ P & 0 &0 \\ KP &0
      &0 \end{bmatrix}\!\! +\int_0^T\!\!\!\begin{pmatrix} \dot{x}(t) \\
      -x(t)\\-u(t)\end{pmatrix}\!\!\!\begin{pmatrix} \dot{x}(t) \\
      -x(t)\\-u(t)\end{pmatrix}^{\!\!\top} \!dt \!\geq \!0.
  \end{equation}
\end{theorem}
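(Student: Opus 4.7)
The strategy is to recast informativity as a ``QMI implies QMI'' statement and invoke a matrix version of the S-lemma, mirroring the approach developed for discrete-time informativity in \cite{HJVW-MKC-JE-HLT:22}. Informativity for quadratic stabilization requires the existence of $(K,P)$ with $P>0$ such that every $(A,B)\in\Sigma=\calZ(N_{\textup{cont}}(Q))$ also satisfies the strict QMI \eqref{eq:stab informativity}. Since both the hypothesis and the conclusion are QMIs in $(A,B)$ of the template $\begin{bmatrix} I_n & A & B\end{bmatrix} N \begin{bmatrix} I_n & A & B\end{bmatrix}^\top \ge 0$ (respectively $>0$), the matrix S-lemma is the natural tool.

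I would proceed in three steps. First, verify the structural precondition required by the matrix S-lemma, typically a Slater-type ``one-positive-eigenvalue'' condition on $N_{\textup{cont}}(Q)$. Inspecting \eqref{eq:N noise}, the block $Q\ge 0$ combined with the negative semidefinite integral term ensures this condition (or a suitable generalization) holds. Second, apply the strict-version matrix S-lemma: the QMI inclusion is equivalent to the existence of a scalar multiplier $\alpha\ge 0$ and a slack $\beta>0$ such that the stability matrix in \eqref{eq:stab informativity} minus $\alpha\, N_{\textup{cont}}(Q)$ dominates $\beta\,\bdiag(I_n,0,0)$. Third, normalize $\alpha$: the stability matrix is linear in $(P,PK^\top)$, so replacing $(P,K)$ by $(P/\alpha,K)$ when $\alpha>0$ absorbs the multiplier into the decision variables. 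The case $\alpha=0$ is ruled out since it would force the stability QMI to hold for \emph{every} $(A,B)$, which is incompatible with $P>0$. Substituting the explicit form of $N_{\textup{cont}}(Q)$ from \eqref{eq:N noise} and of the stability matrix from \eqref{eq:stab informativity} into the resulting LMI produces exactly \eqref{eq:LMI stab}.

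For the converse direction, one simply reverses the argument: the LMI \eqref{eq:LMI stab} is a certificate that, upon left- and right-multiplying by $\begin{bmatrix} I_n & A & B\end{bmatrix}$ and its transpose, forces the stability QMI \eqref{eq:stab informativity} to hold strictly for any $(A,B)\in\Sigma$, with the role of $\beta I_n$ being to guarantee strictness. This direction is the easier half and reduces to a direct quadratic manipulation.

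The main obstacle is selecting and applying the correct variant of the matrix S-lemma: several versions exist with slightly different Slater-type hypotheses, and the one needed here must handle the \emph{strict} inclusion and produce the slack term $\beta I_n$ visible in \eqref{eq:LMI stab}. Fortunately, essentially this variant is established in \cite{HJVW-MKC-JE-HLT:22} for the discrete-time stabilization analog; the continuous-time setting here uses the same algebraic machinery, only with $N$ built from continuous-time moments $\int_0^T(\dot x,-x,-u)(\dot x,-x,-u)^\top dt$ rather than from discrete data matrices. Once the S-lemma is in hand, the remainder is careful bookkeeping to match the $3\times 3$ block structure.
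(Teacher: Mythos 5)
Your proposal is correct and follows essentially the same route as the paper: both recast the problem as a QMI-implies-QMI statement, invoke the matrix S-lemma of \cite{HJVW-MKC-JE-HLT:22} (the paper uses its Corollary 4.13), rule out $\alpha=0$ because the stability matrix $M$ is not positive semidefinite, and absorb $\alpha$ by rescaling $P$ and $\beta$. The only detail you gloss over is the verification of the S-lemma's hypotheses: besides $N_{22}\leq 0$ and the generalized Slater condition (which follow as you indicate), one must also check $\ker N_{22}\subseteq\ker N_{12}$, which the paper establishes by noting that $v^\top\!\int_0^T\left(\begin{smallmatrix}x\\u\end{smallmatrix}\right)\left(\begin{smallmatrix}x\\u\end{smallmatrix}\right)^\top\! dt\, v=0$ forces $\left(\begin{smallmatrix}x(t)\\u(t)\end{smallmatrix}\right)^\top v=0$ for almost all $t$ and hence $v\in\ker\int_0^T\dot{x}\left(\begin{smallmatrix}x\\u\end{smallmatrix}\right)^\top\! dt$ --- a short but necessary argument.
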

\begin{proof}
  We partition $N_{\textup{cont}}(Q)$ as
\vspace{-0.3em}  \[
    N_{\textup{cont}}(Q) =:\begin{bmatrix} N_{11} & N_{12} \\ N_{21} &
      N_{22} \end{bmatrix},
  \] 
  where $N_{11} \in \R^{n\times n}$ and
  $N_{22} \in \R^{(n+m)\times(n+m)}$.  Let
\vspace{-0.3em}  \[
    M:=\begin{bmatrix} 0 & -P& -PK^\top \\ -P & 0 &0 \\ -KP &0
      &0  \end{bmatrix},
  \] 
  for which we consider a similar partition. To prove the result
    we employ~\cite[Corollary 4.13]{HJVW-MKC-JE-HLT:22}, which
    provides conditions equivalent to the required set-inclusion. We
    start by verifying the hypotheses of~\cite[Corollary
    4.13]{HJVW-MKC-JE-HLT:22}, which in this case take the form:
  $N_{22}\leq 0$, $N_{11}-N_{12}N_{22}^\dagger N_{21}\geq 0$,
  $M_{22}\leq0$, and $\ker N_{22} \subseteq \ker N_{12}$. The first
  condition follows from
  \[
    \int_0^T \begin{pmatrix} x(t)\\u(t)\end{pmatrix}\begin{pmatrix}
      x(t)\\u(t)\end{pmatrix}^\top dt\geq 0,
  \] 
  as it is the integral of positive semidefinite
  matrices. The second hypothesis can be derived from
    the fact that $\calZ(N_{\textup{cont}}(Q))$ is nonempty (see
  \cite[Eq. (3.5)]{HJVW-MKC-JE-HLT:22}).  The third is immediate since
  $M_{22}=0$. To show the fourth condition, we need to prove
  \[
    \ker \int_0^T
    \begin{pmatrix}
      x(t)
      \\
      u(t)
    \end{pmatrix}
    \begin{pmatrix}
      x(t)
      \\
      u(t)
    \end{pmatrix}^\top
    dt
    \subseteq
    \ker \int_0^T
    \dot{x}
    \begin{pmatrix}
      x(t)
      \\
      u(t)
    \end{pmatrix}^\top dt .
  \]
  Let
  $v \in \ker \int_0^T \begin{pmatrix}
    x(t)\\u(t)\end{pmatrix}\begin{pmatrix}
    x(t)\\u(t)\end{pmatrix}^\top dt$. Then
  \begin{align*}
    0
    & = v^\top\left( \int_0^T \begin{pmatrix}
        x(t)\\u(t)\end{pmatrix}\begin{pmatrix}
        x(t)\\u(t)\end{pmatrix}^\top dt\right)v
    \\
    & =
      \int_0^T v^\top \begin{pmatrix}
        x(t)\\u(t)\end{pmatrix}\begin{pmatrix}
        x(t)\\u(t)\end{pmatrix}^\top v dt .    
  \end{align*}
  Therefore,
  $ \left(\!\begin{smallmatrix}
      x(t)\\u(t)\end{smallmatrix}\!\right)^{\!\top} v = 0$ for almost
  all $t$, and hence
  $v \in \ker \int_0^T \dot{x}\left(\!\begin{smallmatrix}
      x(t)\\u(t)\end{smallmatrix}\!\right)^{\!\top} dt$. Since
    the hypotheses hold, we can now
    invoke~\cite[Corollary~4.13]{HJVW-MKC-JE-HLT:22} to conclude
    that~\eqref{eq:stab informativity} holds for all $(A,B)\in\Sigma$
  iff there exists $\alpha\geq0$ and $\beta>0$ such that
  \[
    M-\alpha N_{\textup{cont}}(Q) \geq \begin{bmatrix} \beta I_n &0\\
      0& 0\end{bmatrix}.
  \]
  Since $M\not \geq 0$, this requires $\alpha\neq0$. Therefore, we can
  scale $\beta$ and $P$ by $\alpha$, proving the statement.
\end{proof}

As presented, inequality~\eqref{eq:LMI stab} is not a linear matrix
inequality (LMI) in the variables $K$, $P$, and $\beta$. However it
can be rewritten as an LMI using the substitution $L:= KP$. This
allows us to efficiently check for informativity by checking
feasibility of an LMI in the variables $L$, $P$, and
$\beta$. Afterwards, one can use the equation $K=LP^{-1}$ to find the
corresponding stabilizing feedback.

 \begin{remark}[Persistency of excitation]
    The continuous-time signal $x(t)$ is \textit{persistently
      exciting} if there exists $\alpha,T>0$ such that
    $\int_\tau^{\tau+T} x(t)x(t)^\top dt > \alpha I$, for all
    $\tau\geq 0$. Note that, for inequality \eqref{eq:LMI stab} to
    hold with $P>0$, we require $\int_0^T x(t)x(t)^\top dt > 0$. Thus,
    one can say that a necessary condition for informativity for
    quadratic stabilization is that the specific time window of $x(t)$
    is \textit{sufficiently} exciting. \oprocend
\end{remark}

\begin{remark}[Comparison of computational complexity with
  discrete-time case]
  The condition~\eqref{eq:LMI stab} of Theorem~\ref{thm:cont stab}
  takes the form of the scalar inequality $\beta>0$, the $n\times n$
  LMI $P>0$ and an LMI of dimensions $(2n+m) \times (2n+m)$.  Instead,
  the condition of informativity for quadratic stabilization in the
  discrete-time case, cf.~\cite[Theorem 5.1]{HJVW-MKC-JE-HLT:22},
  requires $\beta>0$, $P>0$, and an LMI of dimensions
  $(3n+m)\times (3n+m)$. \oprocend
\end{remark}

\begin{remark}[General noise
	models]\label{rem:assumption-noise-general}
	One can extend Theorem~\ref{thm:cont stab} for noise models more
	general than~\eqref{eq:noise model} without significant additional
	effort.  Let $\Pi: [0,T] \rightarrow \R^{(n+1)\times(n+1)}$ be a
	matrix-valued function and partition it as
	\[
	\Pi(t) = \begin{bmatrix} \Pi_{11}(t) & \Pi_{12}(t) \\ \Pi_{21}(t)
		& \Pi_{22}(t)\end{bmatrix}, \textrm{ with } \Pi_{11}(t) \in
	\R^{n\times n}.
	\]
	Consider the generalized noise model: $w\in\Delta$ if and only if
	\[
	\int_{0}^T
	\begin{bmatrix}
		I_n
		\\
		w(t)^\top
	\end{bmatrix}^\top
	\Pi(t)
	\begin{bmatrix}
		I_n
		\\
		w(t)^\top
	\end{bmatrix}
	dt \geq 0.
	\]
	Under more general assumptions than those made above, 
	an extension of Theorem~\ref{thm:cont stab} can be derived
        analogously. \oprocend 
\end{remark}

%
	
\vspace{-1.5em}\subsection{Informativity with sampled data}\label{sec:sampling}

Here, we analyze when sampled versions of continuous-time data are
sufficiently informative for stabilization.  Let the state and input
trajectories $x:[0,T]\rightarrow\R^n$ and $u:[0,T]\rightarrow\R^m$
satisfy Assumption~\ref{ass:state-input}. Recall the definitions of
the matrices $\dot{X}_\delta$, $X_\delta$, $U_\delta$, and $W_\delta$
in \eqref{eq:matrices}, and consider a noise model
$\Delta_{\textrm{disc}}$ as in \eqref{eq:noise disc}. Note that the 
set of systems compatible with the sampled data
$(\dot{X}_\delta, X_\delta,U_\delta)$ and noise model \eqref{eq:noise
  disc} can be described by $\Sigma^\delta=\calZ(N_\delta(Q))$, where 
\vspace{-0.3em}\[
  N_\delta(Q) := \begin{bmatrix} Q &0 & 0\\0&0&0\\0&0&0 \end{bmatrix}
  - \delta\begin{pmatrix} \dot{X}_\delta \\
    -X_\delta\\-U_\delta\end{pmatrix}\begin{pmatrix} \dot{X}_\delta \\
    -X_\delta\\-U_\delta\end{pmatrix}^\top.
\]
As before, we are interested in finding a stabilizing controller for
$(A_s,B_s)$ on the basis of the discrete measurements, leading to the
following notion.

\vspace{-0.3em}\begin{definition}[Informativity of discrete-time data for quadratic
  stabilization, cf.~{\cite[Def. 2.1]{HJVW-MKC-JE-HLT:22}}]
  The sampled data $(\dot{X}_\delta, X_\delta,U_\delta)$ are
  \textit{informative for continuous-time quadratic stabilization} if
  and only if there exists $K\in\R^{m\times n}$ and
  $P\in \R^{n\times n}$ such that for all $(A,B)\in \Sigma^\delta$:
 \vspace{-0.2em} \[
    P>0, \quad (A+BK)P+P(A+BK)^\top <0.\vspace{-0.2em}
  \]
\end{definition}

We now provide a characterization for informativity of
discrete-time data for stabilization of continuous-time systems:

\begin{theorem}[Necessary and sufficient conditions for
  informativity of discrete-time data]\label{thm:disc stab}
  Suppose the data $(\dot{X}_\delta, X_\delta,U_\delta)$ sampled from
  the system \eqref{eq:system} correspond to noise model
  \eqref{eq:noise disc}. Then, the data
  $(\dot{X}_\delta, X_\delta,U_\delta)$ are informative for
  continuous-time quadratic stabilization if and only if there exists
  $K\in\R^{m\times n}$, $P\in \R^{n\times n}$, and $\beta>0$ such that
  $P>0$ and
  \begin{equation}\label{eq:LMI disc}
    \begin{bmatrix} -Q-\beta I_n \!&
      \!-P\!& \!-PK^\top \\ -P & 0 &0 \\ -KP &0 &0 \end{bmatrix}
    +\delta\!\begin{bmatrix} \dot{X}_\delta \\
      -X_\delta\\-U_\delta\end{bmatrix}\!\!\begin{bmatrix}
      \dot{X}_\delta \\ -X_\delta\\-U_\delta\end{bmatrix}^\top\! \geq
    0. \end{equation}
\end{theorem}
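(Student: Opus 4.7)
The plan is to mirror the proof of Theorem~\ref{thm:cont stab} almost verbatim, exploiting that $\Sigma^\delta = \calZ(N_\delta(Q))$ is again characterized by a QMI and that the target Lyapunov condition takes exactly the QMI form~\eqref{eq:stab informativity}. The whole argument reduces to invoking~\cite[Corollary~4.13]{HJVW-MKC-JE-HLT:22} for the pair $(N_\delta(Q),M)$, where $M$ is the same matrix as in the continuous-time proof.

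Concretely, I would partition
\[
N_\delta(Q) =: \begin{bmatrix} N_{11} & N_{12} \\ N_{21} & N_{22}\end{bmatrix},
\]
with $N_{11}\in\R^{n\times n}$, and check the four hypotheses: $N_{22}\leq 0$, $N_{11}-N_{12}N_{22}^\dagger N_{21} \geq 0$, $M_{22}\leq 0$, and $\ker N_{22}\subseteq \ker N_{12}$. The first is immediate since
\[
N_{22} = -\delta\begin{bmatrix} X_\delta \\ U_\delta\end{bmatrix}\begin{bmatrix} X_\delta^\top & U_\delta^\top\end{bmatrix}
\]
is manifestly negative semidefinite; the second follows from non-emptiness of $\Sigma^\delta$ (which is guaranteed because $(A_s,B_s)$ is consistent with the samples, exactly as in~\cite[Eq.~(3.5)]{HJVW-MKC-JE-HLT:22}); and the third from $M_{22}=0$.

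The kernel inclusion is easier here than in the continuous-time case because no measure-theoretic step is needed: if $v\in\ker N_{22}$ then $v^\top N_{22} v = 0$ forces $\begin{bmatrix} X_\delta^\top & U_\delta^\top\end{bmatrix}v = 0$, and hence $N_{12}v = \delta\dot{X}_\delta\begin{bmatrix} X_\delta^\top & U_\delta^\top\end{bmatrix}v = 0$. The integral-with-almost-everywhere reasoning from Theorem~\ref{thm:cont stab} is replaced by a purely algebraic identity.

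Once the four hypotheses are in place,~\cite[Corollary~4.13]{HJVW-MKC-JE-HLT:22} delivers $\alpha\geq 0$ and $\beta>0$ such that
\[
M - \alpha N_\delta(Q) \geq \begin{bmatrix} \beta I_n & 0 \\ 0 & 0\end{bmatrix}.
\]
Since $M\not\geq 0$ (its diagonal blocks are zero while the off-diagonal blocks are nontrivial), necessarily $\alpha>0$; rescaling $P$ and $\beta$ by $\alpha$ reduces the situation to $\alpha=1$, and expanding $N_\delta(Q)$ in the resulting inequality reproduces exactly~\eqref{eq:LMI disc}. I do not foresee a genuine obstacle: the proof is a direct translation of the continuous-time argument, with the simplification that the kernel step is purely algebraic rather than measure-theoretic.
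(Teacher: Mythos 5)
Your proposal is correct and is exactly the route the paper intends: the paper omits this proof, stating only that it "is similar to that of Theorem~\ref{thm:cont stab}," and your argument carries out that translation faithfully, verifying the four hypotheses of \cite[Corollary~4.13]{HJVW-MKC-JE-HLT:22} for the pair $(N_\delta(Q),M)$ with the kernel inclusion now following from the purely algebraic fact that $v^\top N_{22}v=0$ forces $\begin{bmatrix} X_\delta^\top & U_\delta^\top\end{bmatrix}v=0$. The final rescaling by $\alpha>0$ and the expansion of $M-N_\delta(Q)\geq \bigl[\begin{smallmatrix}\beta I_n & 0\\ 0&0\end{smallmatrix}\bigr]$ into~\eqref{eq:LMI disc} are also as the paper's continuous-time argument prescribes.
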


The proof of this result is similar to that of Theorem~\ref{thm:cont
  stab} and we omit it for brevity.  Note that
Theorem~\ref{thm:disc stab} complements the result in
Theorem~\ref{thm:cont stab}, which characterizes informativity of
continuous-time data for stabilization of continuous-time
systems. Together with~\cite[Thm. 5.1]{HJVW-MKC-JE-HLT:22}, which
characterizes informativity of discrete measurements for stabilization
of discrete systems, these paint a complete picture.

Given these characterizations, a natural question is to figure out the
relationship between continuous-time data $(x,u)$ being informative,
as in Theorem~\ref{thm:cont stab}, and sampled versions of it being
informative, as in Theorem~\ref{thm:disc stab}. As it turns out,
without additional assumptions, there is no implication between the
two notions: data $(x,u)$ can meet the condition~\eqref{eq:LMI stab}
but not those in~\eqref{eq:LMI disc}, and vice versa.  The reason for
this can be tracked back to comparing the terms
\begin{equation}\label{eq:comparison}
  \int_0^T\!\begin{pmatrix} \dot{x}(t) \\
    -x(t)\\-u(t)\end{pmatrix}\!\!\begin{pmatrix} \dot{x}(t) \\
    -x(t)\\-u(t)\end{pmatrix}^\top\! dt \textrm{ and }
  \delta\!\begin{bmatrix} \dot{X}_\delta \\ 
    -X_\delta\\-U_\delta\end{bmatrix}\!\!\begin{bmatrix}
    \dot{X}_\delta \\ -X_\delta\\-U_\delta\end{bmatrix}^\top.
\end{equation}
The issue at hand stems from the fact that, if the signal $w$ (or
equivalently the measurement signals $\dot{x}$, $x$, or $u$) is
changed on a measure zero set, the integral on the left remains the
same, whereas the individual samples on the right might change.  

\begin{example}[Comparing noise models]\label{ex:comparing-noise} 
  The comparison of the quantities in~\eqref{eq:comparison} is
  challenging, as we illustrate here.  For the system with noise,
  $ \dot{{x}}(t) = {w}(t)$, consider measurements over the time
  interval $[0,2]$. Let $ \calS_0 = (1,2)$, $\calS_1 = (1,2]$,
  $\calS_2 = [1,2]$, with corresponding noise signals, \vspace{-0.3em}
  \[ w_\alpha(t) = \small\begin{cases} 1 & \textrm{ for } t \in
      \calS_\alpha \\ 0 & \textrm{ otherwise}\vspace{-0.3em}
    \end{cases} \vspace{-0.3em}
  \]
  for $\alpha\in\{0,1,2\}$. Note that, for each $\alpha$,
  $ \int_{0}^3 w_\alpha(t)w_\alpha(t)^\top dt = 1$.  Given initial
  condition $x(0)=1$, each of these noise signals leads to the same
  state trajectory~$x(t)$. Suppose we sample the system at $t=0$,
  $t=1$ and $t=2$. Defining matrices $W_\alpha$ as
  in~\eqref{eq:matrices} corresponding to the noise signals
  $w_\alpha$, resp., we obtain \vspace{-0.3em}
  \[ W_0W_0^\top =0, \quad W_1W_1^\top = 1, \quad W_2W_2^\top
    =2.\vspace{-0.3em}
  \]
  More generally, this shows that without making further assumptions
  on the signal $w$, we cannot necessarily conclude that certain
  bounds hold for the sampled data. \oprocend
\end{example}

\vspace{-1em}\section{Linking informativity of continuous and discrete
  measurements}\label{sec:links}

In this section we study the relationship between
informativity for stabilization of continuous and discrete
measurements.

\vspace{-0.8em}\subsection{Connections between noise
  models}\label{sec:connection-noise-models} As illustrated by
Example~\ref{ex:comparing-noise}, we need to make additional
assumptions on the noise signal to link informativity of continuous
and discrete measurements. Here, we consider two alternative models:
square Lipschitzness and bounded total square variation.

 \vspace{-0.4em}\begin{definition}[Square Lipschitzness]
  For $L\geq 0$, $w:[0,T]\rightarrow\R^n$ is $L$-\textit{square
    Lipschitz} if for all $t_1,t_2\in [0,T]$:
  \begin{equation}\label{eq:noise Lipschitz}
    \norm{w(t_1)w(t_1)^\top -
      w(t_2)w(t_2)^\top} \leq L\abs{t_1-t_2}.
  \end{equation}	
\end{definition}

This property can be guaranteed on the basis of common assumptions on
the signal $w$.

 \vspace{-0.3em}\begin{lemma}[Square Lipschitzness from common
  assumptions]\label{lem:bd+lip implies sq lip}
  Let $w:[0,T]\rightarrow\R^n$ be differentiable, bounded and
  Lipschitz, that is, $\norm{w(t)}\leq L_1$ and
  $\norm{\dot{w}(t)} \leq L_2$ for all $t\in[0,T]$. Then $w$ is
  $2L_1L_2$-square Lipschitz.
\end{lemma}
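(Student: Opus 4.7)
The plan is to view $f(t) := w(t)w(t)^\top$ as a differentiable matrix-valued function of $t$, bound its derivative in operator norm uniformly by $2L_1L_2$, and then conclude square Lipschitzness by integrating.

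First I would note that, since $w$ is differentiable, the product rule gives
\[
  f'(t) = \dot{w}(t)w(t)^\top + w(t)\dot{w}(t)^\top
\]
for all $t \in [0,T]$. Next I would estimate the operator norm of $f'(t)$ using submultiplicativity of the induced Euclidean norm and the fact that $\|ab^\top\| = \|a\|\|b\|$ for vectors $a,b \in \R^n$. This yields
\[
  \|f'(t)\| \leq \|\dot{w}(t)\|\,\|w(t)\| + \|w(t)\|\,\|\dot{w}(t)\| \leq 2L_1L_2,
\]
using both hypotheses at once.

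Finally, for any $t_1,t_2\in[0,T]$ with, say, $t_1 \leq t_2$, I would apply the fundamental theorem of calculus componentwise to obtain $f(t_2) - f(t_1) = \int_{t_1}^{t_2} f'(\tau)\,d\tau$, and then pass the norm inside the integral (standard for Bochner-type integrals of continuous matrix-valued functions) to conclude
\[
  \|w(t_2)w(t_2)^\top - w(t_1)w(t_1)^\top\| \leq \int_{t_1}^{t_2}\|f'(\tau)\|\,d\tau \leq 2L_1L_2\,|t_2 - t_1|,
\]
which is exactly \eqref{eq:noise Lipschitz}.

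There is no real obstacle here: the proof is essentially the product rule plus a mean-value inequality in operator norm. The only mildly non-routine point is justifying that one may move $\|\cdot\|$ inside the integral of a continuous matrix-valued function, which is a standard fact and can be invoked without further comment.
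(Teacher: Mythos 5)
Your proposal is correct and follows essentially the same route as the paper's proof: differentiate $w(t)w(t)^\top$ via the product rule, apply the fundamental theorem of calculus, and bound the integrand by $2L_1L_2$ using $\norm{ab^\top}=\norm{a}\,\norm{b}$. The only cosmetic difference is that you bound the derivative pointwise before integrating, while the paper moves the norm inside the integral first; the estimates are identical.
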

\begin{proof}
  For $t_1,t_2\in [0,T]$, using that $w$ is differentiable,
  \vspace{-0.3em} \begin{align*}
    w(t_1)w(t_1)^\top\! -\! w(t_2)w(t_2)^\top
    &\!=
      \!\int_{t_2}^{t_1} \frac{d}{dt}\left(w(t)w(t)^\top\right) dt,
    \\
    &\! =\! \int_{t_2}^{t_1} \dot{w}(t)w(t)^\top \!\!+
      w(t)\dot{w}(t)^\top dt. \vspace{-0.3em}
  \end{align*}
  Thus
  $ \norm{w(t_1)w(t_1)^\top -w(t_2)w(t_2)^\top} \leq \int_{t_2}^{t_1}
  2 \norm{\dot{w}(t)w(t)^\top}dt$.  The result follows by noting that
  $ \norm{ \dot{w}(t)w(t)^\top} \leq L_1L_2$. \vspace{-0.4em}
\end{proof}

Note that the conditions of Lemma~\ref{lem:bd+lip implies sq lip} are
not necessary. In particular, $w$ need not be differentiable
everywhere. The following result establishes a relationship between
continuous- and discrete-time noise models.

 \vspace{-0.3em}\begin{lemma}[Continuous- and discrete-time noise models under square
  Lipschitzness]\label{lem:noise models}
  Suppose that $w:[0,T]\rightarrow \R^n$ is $L$-square Lipschitz and
  $\delta$ is such that $\tfrac{T}{\delta}\in\Z_{>0}$. Then
 \vspace{-0.3em}  \[
    \norm{\int_0^T w(t)w(t)^\top dt - \delta W_\delta W_\delta^\top }
    \leq \tfrac{1}{2}\delta TL. \vspace{-0.3em}
  \]
\end{lemma}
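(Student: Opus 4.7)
The plan is to recognize the right-hand quantity $\delta W_\delta W_\delta^\top$ as a left-endpoint Riemann sum for the integral on the left. Since $\tfrac{T}{\delta}\in\Z_{>0}$, I can partition $[0,T]$ into the subintervals $[k\delta,(k+1)\delta]$ for $k=0,\ldots,\tfrac{T}{\delta}-1$ and write
\[
  \int_0^T w(t)w(t)^\top dt - \delta W_\delta W_\delta^\top
  = \sum_{k=0}^{T/\delta-1} \int_{k\delta}^{(k+1)\delta} \bigl( w(t)w(t)^\top - w(k\delta)w(k\delta)^\top \bigr)\, dt,
\]
where I have used that $\int_{k\delta}^{(k+1)\delta} w(k\delta)w(k\delta)^\top\, dt = \delta\, w(k\delta)w(k\delta)^\top$ and summed the matrices $w(k\delta)w(k\delta)^\top$ exactly reproduce $\delta W_\delta W_\delta^\top$.

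Next, I would apply the triangle inequality (the induced Euclidean norm is subadditive under integrals and sums) to move the norm inside, and invoke the $L$-square Lipschitz hypothesis \eqref{eq:noise Lipschitz} with $t_1=t$ and $t_2=k\delta$ to bound the pointwise integrand by $L(t-k\delta)$ on each subinterval. The elementary calculation $\int_{k\delta}^{(k+1)\delta} L(t-k\delta)\, dt = \tfrac{1}{2}L\delta^2$ then gives a per-subinterval bound.

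Summing over the $\tfrac{T}{\delta}$ subintervals yields
\[
  \Bigl\| \int_0^T w(t)w(t)^\top dt - \delta W_\delta W_\delta^\top \Bigr\|
  \leq \frac{T}{\delta}\cdot \frac{1}{2}L\delta^2 = \frac{1}{2}\delta T L,
\]
which is the stated bound. There is no real obstacle here; the only small subtlety is justifying pulling the norm inside the integral, which is standard since $w\in\Delta\subseteq\calL_2$ makes $w(\cdot)w(\cdot)^\top$ Bochner integrable, and the square Lipschitz assumption makes the integrand continuous. The approach is tight in the sense that the bound $L|t-k\delta|$ is the best one gets from square Lipschitzness alone, so one cannot easily improve beyond the factor of $\tfrac{1}{2}$.
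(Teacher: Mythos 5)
Your proposal is correct and follows exactly the paper's argument: the same decomposition of the difference into per-subinterval integrals of $w(t)w(t)^\top - w(k\delta)w(k\delta)^\top$ (equation~\eqref{eq:aux} in the paper), the same application of the triangle inequality and the square Lipschitz bound $L\abs{t-k\delta}$, and the same summation giving $\tfrac{T}{\delta}\cdot\tfrac{1}{2}L\delta^2 = \tfrac{1}{2}\delta TL$. No differences worth noting.
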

\begin{proof}
  Note that we can write \phantom{\qedhere}
 \vspace{-0.3em}  \begin{align}\label{eq:aux}
    &\int_0^T w(t)w(t)^\top dt - \delta W_\delta W_\delta^\top \notag
    \\
    &= \sum_{k=0}^{\sfrac{T}{\delta}-1} \int_{k\delta}^{(k+1)\delta}
      (w(t)w(t)^\top-w(k\delta)w(k\delta)^\top) dt.   \vspace{-0.4em}
  \end{align} 
  Since $w$ is $L$-square Lipschitz,
  $\norm{ w(t)w(t)^\top- w(k\delta)w(k\delta)^\top} \leq
  \abs{t-k\delta} L$ for $t\in [k\delta, (k+1)\delta]$.~Hence,
 \vspace{-0.3em}  \begin{align*}
    &\norm{\int_0^T w(t)w(t)^\top dt - \delta W_\delta W_\delta^\top }
    \\ 
    & \leq \sum_{k=0}^{\sfrac{T}{\delta}-1}
      \int_{k\delta}^{(k+1)\delta}
      \norm{w(t)w(t)^\top-w(k\delta)w(k\delta)^\top} dt
    \\
    & \leq  L
      \sum_{k=0}^{\sfrac{T}{\delta}-1}\int_{k\delta}^{(k+1)\delta}
      \abs{t-k\delta}dt  = \tfrac{1}{2}\delta TL  .\vspace{-0.4em}\tag*{\qed}
  \end{align*} 
\end{proof}
Square Lipschitzness requires the noise signal $w$ to be
continuous. As an alternative, the following concept allows us to consider
discontinuous signals.

 \vspace{-0.3em}\begin{definition}[Total square variation]
  Let $\calP$ denote the set of all partitions of $[\tau,T]$, that is,
  \[
    \calP = \{ \pi = \{t_0,\ldots, t_{n_\pi}\} \mid \tau=t_0\leq \ldots
    \leq t_{n_\pi}=T\}.
  \]
  The \textit{total square variation} of the signal
  $w:[\tau,T]\rightarrow\R^n$ is
  \[
    V_\tau^T(w) = \sup_{\pi\in\calP} \sum_{i=0}^{n_\pi-1}
    \norm{w(t_{i+1})w(t_{i+1})^\top - w(t_{i})w(t_{i})^\top} .
  \]
\end{definition}

The step function is an example of a discontinuous signal that has a
finite total square variation. The following result establishes another 
relationship between continuous- and discrete-time noise models.

\begin{lemma}[Continuous- and discrete-time noise models under bounded
  total variation]\label{lem:BV noise models}
  Suppose that $w:[0,T]\rightarrow \R^n$ has $V_0^T(w)$ finite and let
  $\delta$ be such that $\tfrac{T}{\delta}\in\Z_{>0}$. Then,
\vspace{-0.3em}  \[
    \norm{\int_0^T w(t)w(t)^\top dt - \delta W_\delta W_\delta^\top }
    \leq \delta V_0^T(w).\vspace{-0.3em}
  \]
\end{lemma}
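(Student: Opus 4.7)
The plan is to follow the same template as the proof of Lemma~\ref{lem:noise models}, replacing the square Lipschitz bound with a bound via the total square variation. First, I would start from the identity
\[
  \int_0^T w(t)w(t)^\top dt - \delta W_\delta W_\delta^\top
  = \sum_{k=0}^{T/\delta-1} \int_{k\delta}^{(k+1)\delta}
     \bigl( w(t)w(t)^\top - w(k\delta)w(k\delta)^\top \bigr) dt,
\]
which is exactly the decomposition already established as \eqref{eq:aux}. Applying the triangle inequality to the outer sum and then to each integral reduces the problem to bounding $\norm{w(t)w(t)^\top - w(k\delta)w(k\delta)^\top}$ pointwise on $[k\delta,(k+1)\delta]$.

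Next, I would observe that the two-point partition $\{k\delta, t\}$ of $[k\delta, t]$ gives, directly from the definition,
\[
  \norm{w(t)w(t)^\top - w(k\delta)w(k\delta)^\top}
  \le V_{k\delta}^{t}(w)
  \le V_{k\delta}^{(k+1)\delta}(w),
\]
where the last inequality uses that total variation is monotone in the upper limit (adding more points to a partition of a larger interval only increases the sum). Integrating this bound over $t\in[k\delta,(k+1)\delta]$, the constant bound yields $\delta V_{k\delta}^{(k+1)\delta}(w)$, and summing over $k$ gives
\[
  \norm{\int_0^T w(t)w(t)^\top dt - \delta W_\delta W_\delta^\top}
  \le \delta \sum_{k=0}^{T/\delta-1} V_{k\delta}^{(k+1)\delta}(w).
\]

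The final step is to collapse this sum into $V_0^T(w)$. Viewing $t\mapsto w(t)w(t)^\top$ as a matrix-valued map and $V_0^T$ as its total variation with respect to the matrix norm $\norm{\cdot}$, additivity over adjacent intervals gives $\sum_{k=0}^{T/\delta-1} V_{k\delta}^{(k+1)\delta}(w) = V_0^T(w)$, which completes the proof. I expect no real obstacle here: the only subtle point is justifying this additivity, which follows because any partition of $[0,T]$ can be refined to contain all the sample times $\{k\delta\}$, and refinement only increases the variation sum, so $V_0^T(w) \ge \sum_k V_{k\delta}^{(k+1)\delta}(w)$, while the reverse inequality is obtained by concatenating optimal partitions on each subinterval. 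In short, the proof is a direct translation of the square Lipschitz argument, with the $L\abs{t-k\delta}$ bound replaced by the local total variation.
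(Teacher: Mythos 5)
Your proof is correct and follows essentially the same route as the paper's: both start from the decomposition \eqref{eq:aux} and bound $\norm{w(t)w(t)^\top - w(k\delta)w(k\delta)^\top}$ by the local variation $V_{k\delta}^{(k+1)\delta}(w)$ before summing via additivity of the variation over adjacent subintervals. The only cosmetic difference is that the paper gets the pointwise bound from the three-point partition $\{k\delta, t, (k+1)\delta\}$ of the subinterval and asserts additivity directly, whereas you use a two-point partition plus monotonicity and spell out the refinement argument for additivity.
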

\begin{proof}
  Let $V_k := V_{k\delta}^{(k+1)\delta}(w)$. By definition,
  $ V_0^T(w) = \sum_{k=0}^{\sfrac{T}{\delta}-1} V_k$.  Now, for any
  $t \in [k\delta,(k+1)\delta]$, consider the partition
  $\{k\delta, t,(k+1)\delta\}$. Then,
  \begin{align*}
    V_k &\geq
          \norm{w(t)w(t)^\top-w((k+1)\delta)w((k+1)\delta)^\top}
    \\
        &\quad\quad  + \norm{w(t)w(t)^\top-w(k\delta)w(k\delta)^\top}
    \\
        & \geq
          \norm{w(t)w(t)^\top-w(k\delta)w(k\delta)^\top}.
  \end{align*}
  This, combined with~\eqref{eq:aux}, yields the result.\vspace{-0.3em}
\end{proof}

Note that if $w$ is $L$-square Lipschitz, then $V_0^T(w) \leq LT$, and
in this case the result in Lemma~\ref{lem:BV noise models} (bound with
$\delta LT$) is weaker than that of Lemma~\ref{lem:noise
  models} (bound with $\tfrac{1}{2} \delta LT$).
Lemmas~\ref{lem:noise models} or~\ref{lem:BV noise models} allow us to
bound the deviation of the continuous-time signal to its samples and
draw conclusions regarding the noise model~\eqref{eq:noise model} and
its counterpart~\eqref{eq:noise disc}.

\vspace{-0.3em}\begin{corollary}[Relations between noise models]
	\label{cor:noise models}
  Suppose $\delta$ is such that $\tfrac{T}{\delta}\in\Z_{>0}$ and let
  $L\geq 0$ be such that $w:[0,T]\rightarrow \R^n$ is either (i)
  $L$-square Lipschitz or (ii) $V_0^T(w)\leq \tfrac{1}{2}LT$.  Then,
  the following two statements hold:
\vspace{-0.3em}  \begin{align*}
    \delta W_\delta W_\delta^\top \leq Q
    & \Rightarrow \int_0^T
      w(t)w(t)^\top dt \leq Q + \tfrac{1}{2}\delta TLI_n,
    \\
    \int_0^T w(t)w(t)^\top dt \leq Q
    &
      \Rightarrow \delta W_\delta
      W_\delta^\top  \leq Q+\tfrac{1}{2}\delta TLI_n.	\vspace{-0.3em} 
  \end{align*}
\end{corollary}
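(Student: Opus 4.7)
The plan is to reduce both cases (i) and (ii) to a single norm bound on the difference $\int_0^T w(t)w(t)^\top dt - \delta W_\delta W_\delta^\top$, and then convert that norm bound into the desired matrix inequalities using the standard fact that for a symmetric matrix $M$, $\norm{M}\leq c$ implies $-cI_n \leq M \leq cI_n$.

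First I would unify the two hypotheses. In case (i), Lemma~\ref{lem:noise models} directly gives
\[
  \norm{\int_0^T w(t)w(t)^\top dt - \delta W_\delta W_\delta^\top} \leq \tfrac{1}{2}\delta TL.
\]
In case (ii), Lemma~\ref{lem:BV noise models} gives the bound $\delta V_0^T(w)$, and combining with $V_0^T(w)\leq \tfrac{1}{2}LT$ yields the same bound $\tfrac{1}{2}\delta TL$. So both hypotheses deliver exactly the same norm inequality, and the two implications now follow from a common argument.

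Next, writing $M := \int_0^T w(t)w(t)^\top dt - \delta W_\delta W_\delta^\top$, note that $M$ is symmetric since both terms are. The norm bound $\norm{M} \leq \tfrac{1}{2}\delta TL$ thus implies the two-sided inequality $-\tfrac{1}{2}\delta TL I_n \leq M \leq \tfrac{1}{2}\delta TL I_n$. For the first implication, assume $\delta W_\delta W_\delta^\top \leq Q$; then
\[
  \int_0^T w(t)w(t)^\top dt = \delta W_\delta W_\delta^\top + M \leq Q + \tfrac{1}{2}\delta TL I_n.
\]
For the second implication, assume $\int_0^T w(t)w(t)^\top dt \leq Q$; then
\[
  \delta W_\delta W_\delta^\top = \int_0^T w(t)w(t)^\top dt - M \leq Q + \tfrac{1}{2}\delta TL I_n,
\]
where we used $-M \leq \tfrac{1}{2}\delta TL I_n$. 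This completes the proof.

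There is no real obstacle: all of the technical work has already been done inside Lemmas~\ref{lem:noise models} and~\ref{lem:BV noise models}. The only subtlety worth flagging in the write-up is the choice of the constant $\tfrac{1}{2}LT$ in hypothesis (ii), which is precisely what makes the two cases yield the same bound and hence the same conclusion.
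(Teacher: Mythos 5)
Your proof is correct and follows exactly the route the paper intends: the corollary is stated there without an explicit proof, as an immediate consequence of Lemmas~\ref{lem:noise models} and~\ref{lem:BV noise models}, and your argument (unifying both hypotheses into the common norm bound $\tfrac{1}{2}\delta TL$ and converting it to a two-sided semidefinite inequality for the symmetric difference matrix) is precisely the omitted reasoning.
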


\subsection{Inclusions between sets of consistent systems}
Here we address the second objective laid out in
Section~\ref{sec:problem} and compare the stabilizability properties
of the sets $\calZ(N_{\textup{cont}}(Q))$ and $\calZ(N_{\delta}(Q))$.
To tackle this, note that the additional assumptions on the noise
signal described in Section~\ref{sec:connection-noise-models} shrink
the set of systems consistent with the data and we formalize this
next. Given state and input trajectories $x:[0,T]\rightarrow\R^n$ and
$u:[0,T]\rightarrow\R^m$ that satisfy
Assumption~\ref{ass:state-input}, we define the sets
\vspace{-0.3em}\begin{align*}
  \calM_{x,u}^L
  & :=
    \lbrace(A,B)\in \R^{n\times n} \times
    \R^{n\times m} \mid \dot{x}-Ax-Bu
  \\
  &
    \quad \quad \textrm{ is } L\textrm{-square
    Lipschitz} \rbrace,
  \\ 
  \calN_{x,u}^L
  & :=
    \lbrace(A,B)\in \R^{n\times n} \times \R^{n\times
    m} \mid
  \\
  & \quad \quad V_0^T(\dot{x}-Ax-Bu) \leq
    \tfrac{1}{2}LT \rbrace. \vspace{-0.3em}
\end{align*}
Then, the set of all systems compatible with the measurement, the
noise model \eqref{eq:noise model}, and for which the noise is
$L$-square Lipschitz is
\begin{subequations}\label{eq:comp}
 \vspace{-0.3em} \begin{equation}\label{eq:comp+slc}
    \calZ(N_{\textup{cont}}(Q)) \cap
    \calM_{x,u}^L.\vspace{-0.3em}
  \end{equation}
  In a similar fashion, the set of all systems compatible with the
  measurement, the noise model \eqref{eq:noise model}, and for which
  the total square variation of the noise is less than or equal to
  $\tfrac{1}{2} LT$ is
\vspace{-0.3em}  \begin{equation}\label{eq:comp+bv}
    \calZ(N_{\textup{cont}}(Q)) \cap
    \calN_{x,u}^L.\vspace{-0.3em}
  \end{equation}
\end{subequations}
The true system from which the measurements are taken is contained in
the intersections in~\eqref{eq:comp} if the true realization of the
noise has the corresponding property. The following result is a
consequence of Corollary~\ref{cor:noise models}.

\vspace{-0.3em}\begin{corollary}[Inclusion relationships between sets of consistent
  systems]\label{cor:set inclusions}
  Let $x:[0,T]\rightarrow\R^n$ and $u:[0,T]\rightarrow\R^m$ be state
  and input trajectories satisfying
  Assumption~\ref{ass:state-input} and  let $\delta$ be such that
  $\tfrac{T}{\delta}\in\Z_{>0}$. Then
 \begin{itemize}
  \item{} [L-square Lipschitz noise:]
\vspace{-0.3em}    \begin{subequations}\label{eq:inclusions-Lipschitz}
      \begin{align}
        \calZ(N_\delta(Q)) \cap \calM_{x,u}^L
        &\subseteq \calZ(N_{\textup{cont}}(Q \!+ \! \tfrac{1}{2}\delta TLI_n)),
        \\
        \calZ(N_{\textup{cont}}(Q)) \cap \calM_{x,u}^L
        &\subseteq
          \calZ(N_\delta(Q+\tfrac{1}{2}\delta TLI_n)).\vspace{-0.4em} \label{eq:inclusions-Lipschitz 2}
      \end{align}
    \end{subequations}
    Moreover, if the noise signal corresponding to the measurements,
    $w:[0,T]\rightarrow \R^n$, is $L$-square Lipschitz, then the set
    on the left-hand side in~\eqref{eq:inclusions-Lipschitz 2} is
    non-empty and contains the true system.
  \item{} [Noise of bounded total square variation:]
\vspace{-0.3em}\begin{subequations}\label{eq:inclusions-variation}
      \begin{align}
        \calZ(N_\delta(Q)) \cap \calN_{x,u}^L
        &\subseteq
          \calZ(N_{\textup{cont}}(Q+ \tfrac{1}{2}\delta TLI_n)),
        \\
        \calZ(N_{\textup{cont}}(Q))\cap \calN_{x,u}^L
        & \subseteq \calZ(N_\delta(Q+\tfrac{1}{2}\delta TLI_n)).\vspace{-0.4em}  \label{eq:inclusions-variation 2}
      \end{align}
    \end{subequations}
    Moreover, if the noise signal corresponding to the measurements,
    $w:[0,T]\rightarrow \R^n$, is such that
    $V_0^T(w)\leq \tfrac{1}{2}LT$, then the set on the left-hand side
    in~\eqref{eq:inclusions-variation 2} is non-empty and contains the
    true system.
  \end{itemize}\vspace{-0.3em}
\end{corollary}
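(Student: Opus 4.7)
The plan is to reduce both bullets of the corollary directly to Corollary~\ref{cor:noise models}, applied pointwise to each candidate pair $(A,B)$. The key observation is that, for any $(A,B)\in\R^{n\times n}\times \R^{n\times m}$, the residual $w_{A,B}(t) := \dot{x}(t)-Ax(t)-Bu(t)$ plays the role of the noise signal consistent with this system. Unrolling the definition of $\calZ$, one has $(A,B)\in \calZ(N_{\textup{cont}}(Q))$ if and only if $\int_0^T w_{A,B}(t)w_{A,B}(t)^\top dt \leq Q$, and $(A,B)\in \calZ(N_\delta(Q))$ if and only if $\delta\, W_{A,B,\delta} W_{A,B,\delta}^\top \leq Q$, where $W_{A,B,\delta}$ is the sample matrix of $w_{A,B}$ as in~\eqref{eq:matrices}. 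Moreover, membership in $\calM_{x,u}^L$ (resp.\ $\calN_{x,u}^L$) is by construction precisely the hypothesis of case~(i) (resp.\ case~(ii)) of Corollary~\ref{cor:noise models} applied to $w_{A,B}$.

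To establish the first inclusion of~\eqref{eq:inclusions-Lipschitz}, I would take $(A,B)\in \calZ(N_\delta(Q))\cap \calM_{x,u}^L$, so that $w_{A,B}$ is $L$-square Lipschitz and $\delta W_{A,B,\delta}W_{A,B,\delta}^\top \leq Q$, and invoke the first implication in Corollary~\ref{cor:noise models} to conclude $\int_0^T w_{A,B}w_{A,B}^\top dt \leq Q + \tfrac{1}{2}\delta T L\, I_n$, i.e., $(A,B)\in \calZ(N_{\textup{cont}}(Q+\tfrac{1}{2}\delta TL I_n))$. Inclusion~\eqref{eq:inclusions-Lipschitz 2} follows by the symmetric argument using the second implication of Corollary~\ref{cor:noise models}. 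The two inclusions in~\eqref{eq:inclusions-variation} are obtained identically, invoking instead case~(ii) of the same corollary through the membership condition $V_0^T(w_{A,B})\leq \tfrac{1}{2}LT$ encoded by $\calN_{x,u}^L$.

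For the non-emptiness claims, I would observe that Assumption~\ref{ass:state-input} guarantees that the true noise signal $w = \dot{x}-A_s x - B_s u$ satisfies $\int_0^T w(t)w(t)^\top dt \leq Q$, so $(A_s,B_s)\in \calZ(N_{\textup{cont}}(Q))$. Under the additional hypothesis that $w$ is $L$-square Lipschitz (resp.\ $V_0^T(w)\leq \tfrac{1}{2}LT$), the pair $(A_s,B_s)$ also lies in $\calM_{x,u}^L$ (resp.\ $\calN_{x,u}^L$) directly from the definitions, so the true system belongs to the left-hand side of~\eqref{eq:inclusions-Lipschitz 2} (resp.\ \eqref{eq:inclusions-variation 2}), proving both non-emptiness statements.

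The proof is almost entirely bookkeeping; there is no substantial technical obstacle once the equivalences between QMI membership and the integral/sampled matrix bounds are written down cleanly. The main thing to be careful about is to consistently apply Corollary~\ref{cor:noise models} to $w_{A,B}$ (the residual associated with the candidate system) rather than to the actual realized noise $w$, and to verify that the regularity parameters $L$ used in $\calM_{x,u}^L$ and $\calN_{x,u}^L$ exactly match the bound $\tfrac{1}{2}\delta TL$ appearing in the shifted noise level.
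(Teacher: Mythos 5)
Your proposal is correct and follows essentially the same route as the paper, which presents this corollary precisely as a direct consequence of Corollary~\ref{cor:noise models} applied to the residual $\dot{x}-Ax-Bu$ of each candidate system $(A,B)$, with the true system handled via the realized noise $w$. The bookkeeping identifications you spell out (QMI membership in $\calZ(N_{\textup{cont}}(Q))$ and $\calZ(N_\delta(Q))$ as the integral and sampled bounds on the residual, and $\calM_{x,u}^L$, $\calN_{x,u}^L$ as the regularity hypotheses of that corollary) are exactly the intended argument.
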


Recall that stabilizing controllers for the sets in the right-hand
sides of~\eqref{eq:inclusions-Lipschitz}
or~\eqref{eq:inclusions-variation} can be found using either
Theorems~\ref{thm:cont stab} or~\ref{thm:disc stab}. In particular,
this means that Corollary~\ref{cor:set inclusions} allows us to find a
stabilizing controller for all systems in~\eqref{eq:comp}.

\vspace{-0.3em}\begin{theorem}[Sufficient conditions for sampled data]
	\label{thm:lmi sampling implies cont}
  Consider state and input trajectories $x:[0,T]\rightarrow\R^n$ and
  $u:[0,T]\rightarrow\R^m$ such that Assumption~\ref{ass:state-input}
  holds. Suppose there exists $K\in\R^{m\times n}$,
  $P\in \R^{n\times n}$, and $\beta>\tfrac{1}{2}\delta TL$ such that
  $P>0$ and~\eqref{eq:LMI disc} is satisfied.
  Then, \eqref{eq:Lyap} holds for all
  $(A,B)\in \calZ(N_{\textup{cont}}(Q)) \cap \calM_{x,u}^L$ and all
  $(A,B)\in \calZ(N_{\textup{cont}}(Q)) \cap \calN_{x,u}^L$.
\end{theorem}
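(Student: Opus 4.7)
The plan is to combine Theorem~\ref{thm:disc stab} with the inclusions~\eqref{eq:inclusions-Lipschitz 2} and~\eqref{eq:inclusions-variation 2} from Corollary~\ref{cor:set inclusions}. The key observation is that the hypothesis $\beta > \tfrac{1}{2}\delta TL$ leaves enough slack in the discrete-time LMI \eqref{eq:LMI disc} to inflate the noise bound $Q$ by exactly the correction term appearing in Corollary~\ref{cor:set inclusions}, without losing feasibility.

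Concretely, I would first set $\tilde{Q} := Q + \tfrac{1}{2}\delta TL\, I_n$ and $\tilde{\beta} := \beta - \tfrac{1}{2}\delta TL$. The assumption $\beta > \tfrac{1}{2}\delta TL$ gives $\tilde{\beta}>0$. A direct inspection of the $(1,1)$-block shows $\tilde{Q} + \tilde{\beta}I_n = Q + \beta I_n$, so the matrix appearing in \eqref{eq:LMI disc} is unchanged when $(Q,\beta)$ is replaced by $(\tilde Q,\tilde \beta)$. Hence \eqref{eq:LMI disc} also holds with the pair $(\tilde Q,\tilde\beta)$, and Theorem~\ref{thm:disc stab} (applied with this new pair) yields that \eqref{eq:Lyap} is satisfied by every $(A,B)\in\calZ(N_\delta(\tilde Q))=\calZ(N_\delta(Q+\tfrac12\delta TL\, I_n))$ with the same $K$ and $P$.

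The second step is to import the systems of interest into this set via the inclusions proved in Corollary~\ref{cor:set inclusions}. For the $L$-square Lipschitz case, \eqref{eq:inclusions-Lipschitz 2} gives
\[
\calZ(N_{\textup{cont}}(Q))\cap\calM_{x,u}^L \subseteq \calZ(N_\delta(Q+\tfrac{1}{2}\delta TL\, I_n)),
\]
and likewise \eqref{eq:inclusions-variation 2} covers the bounded total square variation case. Composing the two steps, \eqref{eq:Lyap} is satisfied by every $(A,B)$ in either of the two intersections, which is the claim.

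There is no real obstacle here: the argument is essentially a slack-reallocation trick followed by an invocation of the already-established set inclusions. The only subtlety is the sign/position of the correction term, which is why the condition on $\beta$ must be a strict inequality; otherwise, the resulting $\tilde\beta$ would fail to satisfy the strict positivity required by Theorem~\ref{thm:disc stab}. No additional assumption on the true noise signal is needed for the stabilization statement itself, although of course such an assumption is what guarantees that the true system actually lies in one of the intersections considered.
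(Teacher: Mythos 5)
Your proposal is correct and follows exactly the route the paper indicates (the paper gives only the one-line remark that the result "follows from combining Corollary~\ref{cor:set inclusions} and Theorem~\ref{thm:disc stab}"); your slack-reallocation $(\tilde Q,\tilde\beta)=(Q+\tfrac12\delta TL I_n,\,\beta-\tfrac12\delta TL)$, which leaves the $(1,1)$-block of \eqref{eq:LMI disc} unchanged, is precisely the intended instantiation. No gaps.
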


This result follows from combining Corollary~\ref{cor:set inclusions}
and Theorem~\ref{thm:disc stab}.
Theorem~\ref{thm:lmi sampling implies cont} provides conditions under
which the \emph{true} system can be stabilized and, importantly, this
can be checked with only samples of the measurements (i.e., the
conditions \emph{do not} require knowledge of the continuous-time
signals themselves).  In contrast, Theorem~\ref{thm:disc
  stab} similarly only relies on samples, but it only guarantees stabilization
of all systems in $\Sigma^\delta$. As discussed, for a given $\delta$, the
set $\calZ(N_\textup{cont})$ is \textit{not} necessarily contained in
$\Sigma^\delta$. Given that we cannot distinguish the true system from
any other system in $\calZ(N_\textup{cont})$, this means that
Theorem~\ref{thm:disc stab} might not guarantee the stabilization of
the true system. Comparing Theorems~\ref{thm:lmi sampling implies cont}
and~\ref{thm:disc stab}, we note that both require the satisfaction of
the same LMI, but that
Theorem~\ref{thm:lmi sampling implies cont} specifies
$\beta>\tfrac{1}{2}\delta TL$ instead of $\beta>0$. This can be
interpreted as requiring a \textit{margin of stability}, given
that~\eqref{eq:LMI disc} implies that
\vspace{-0.3em}\[
  (A+BK)P+P(A+BK)^\top < -\beta I_n < -\tfrac{1}{2}\delta TLI_n,\vspace{-0.3em}
\]
for all $(A,B)\in\calZ(N_\delta(Q))$. Theorem~\ref{thm:lmi sampling
  implies cont} can then be restated as follows: if the closed-loop
systems resulting from all systems compatible with the sampled
measurements are stable `enough', then all systems compatible with the
continuous measurements are stabilized as well.

\subsection{Verifying assumptions of Theorem~\ref{thm:lmi
			sampling implies cont}}%

A natural question arising from the result in Theorem~\ref{thm:lmi
  sampling implies cont} is: how small should the stepsize be to
ensure the samples from the continuous-time signals remain
informative?  Intuitively, if we sample very coarsely, e.g., with
$\delta =T$, then this will be unlikely.  The following result settles
this question.

\vspace{-0.3em}\begin{corollary}[Bound on stepsize for informativity of sampled
  data]\label{cor:how fine to sample}
  Consider state and input trajectories $x:[0,T]\rightarrow\R^n$ and
  $u:[0,T]\rightarrow\R^m$ such that Assumption~\ref{ass:state-input}
  holds. Assume the corresponding noise signal $w$ is either
  $L$-square Lipschitz or such that $V_0^T(w)\leq
  \tfrac{1}{2}LT$. Suppose that $(x,u)$ are informative for quadratic
  stabilization and let $\hat{\beta}$ be the largest $\beta>0$ such
  that there exists $K\in\R^{m\times n}$, $P\in \R^{n\times n}$, with
  $P>0$ and \eqref{eq:LMI stab}. If
  $\delta < \frac{1}{TL}\hat{\beta}$, then \eqref{eq:LMI disc} holds
  with
  $\beta=\hat{\beta}-\tfrac{1}{2}\delta TL > \tfrac{1}{2}\delta TL$.\vspace{-0.3em}
  %
\end{corollary}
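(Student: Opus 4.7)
The plan is to take the matrices $K$ and $P$ that achieve \eqref{eq:LMI stab} with margin $\hat\beta$ and argue that the same pair, together with $\beta := \hat\beta - \tfrac{1}{2}\delta TL$, satisfies \eqref{eq:LMI disc}. The stepsize bound $\delta < \hat\beta/(TL)$ rewrites as $\hat\beta > \delta TL$, which immediately yields $\beta = \hat\beta - \tfrac{1}{2}\delta TL > \tfrac{1}{2}\delta TL > 0$, which gives both the positivity of $\beta$ needed in the LMI and the strict scalar inequality in the conclusion.

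Next, I denote
\[
\Phi(\beta) := \begin{bmatrix} Q + \beta I_n & P & PK^\top \\ P & 0 & 0 \\ KP & 0 & 0 \end{bmatrix},
\]
and let $M_{\textup{int}}$ and $M_{\textup{disc}}$ be the integral and sampled outer-product matrices appearing in \eqref{eq:LMI stab} and \eqref{eq:LMI disc}, so that these LMIs read $M_{\textup{int}} \geq \Phi(\hat\beta)$ and $M_{\textup{disc}} \geq \Phi(\beta)$ respectively. Since $\Phi(\hat\beta) = \Phi(\beta) + \tfrac{1}{2}\delta TL \cdot \diag(I_n,0,0)$, the hypothesis rewrites as $M_{\textup{int}} - \tfrac{1}{2}\delta TL \cdot \diag(I_n,0,0) \geq \Phi(\beta)$. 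Hence, by transitivity, it suffices to establish the block-matrix inequality
\[
M_{\textup{int}} - M_{\textup{disc}} \leq \tfrac{1}{2}\delta TL \cdot \diag(I_n,0,0).
\]

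This final matrix inequality is where Lemma~\ref{lem:noise models} (in the $L$-square Lipschitz case) or Lemma~\ref{lem:BV noise models} (in the total-square-variation case) enters, yielding the scalar-level norm bound $\norm{\int_0^T w(t) w(t)^\top dt - \delta W_\delta W_\delta^\top} \leq \tfrac{1}{2}\delta TL$. The main obstacle, and the step I expect to be most delicate, is lifting this $w$-level norm bound to the full $(2n+m)$-dimensional block inequality above: the system identity $\dot x = A_s x + B_s u + w$ localizes the noise-dependent part of $M_{\textup{int}} - M_{\textup{disc}}$ to the $(1,1)$-block, while the cross terms involving $x$ and $u$ are controlled via the absolute continuity of these trajectories using the same Riemann-sum comparison employed in the proofs of Lemmas~\ref{lem:noise models} and~\ref{lem:BV noise models}. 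Once this matrix inequality is in hand, \eqref{eq:LMI disc} with $\beta = \hat\beta - \tfrac{1}{2}\delta TL$ follows immediately by transitivity, completing the argument.
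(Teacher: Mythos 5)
Your scalar bookkeeping is fine ($\delta < \hat\beta/(TL)$ gives $\hat\beta - \tfrac{1}{2}\delta TL > \tfrac{1}{2}\delta TL > 0$), but the step you defer as ``most delicate'' is not merely delicate --- it is false, and the whole reduction hinges on it. For your target inequality $M_{\textup{int}} - M_{\textup{disc}} \leq \tfrac{1}{2}\delta TL\,\diag(I_n,0,0)$ to hold in the positive semidefinite order, the lower-right block of the difference
\[
\tfrac{1}{2}\delta TL\,\diag(I_n,0,0) - \bigl(M_{\textup{int}} - M_{\textup{disc}}\bigr),
\]
namely
\[
\delta\begin{bmatrix} X_\delta \\ U_\delta\end{bmatrix}\begin{bmatrix} X_\delta \\ U_\delta\end{bmatrix}^\top - \int_0^T \begin{pmatrix} x(t) \\ u(t)\end{pmatrix}\begin{pmatrix} x(t) \\ u(t)\end{pmatrix}^\top dt,
\]
would itself have to be positive semidefinite, i.e., the Riemann sum would have to dominate the integral. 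Nothing in the hypotheses delivers this: take $u(t)=\sin(\pi t/\delta)$, which is smooth and bounded yet vanishes at every sample point, so the sampled term is zero while the integral is $T/2$. More fundamentally, the corollary assumes square Lipschitzness (or bounded total square variation) only of the \emph{noise} $w$; the state is merely absolutely continuous and the input merely square integrable, so no quantitative Riemann-sum comparison of the type in Lemmas~\ref{lem:noise models} and~\ref{lem:BV noise models} is available for the blocks involving $x$ and $u$ --- the phenomenon of Example~\ref{ex:comparing-noise} applies to them just as it does to $w$. Your remark that the system identity ``localizes the noise-dependent part to the $(1,1)$-block'' is also not right: $\dot x\dot x^\top$ contains cross terms between $A_sx+B_su$ and $w$, and in any case the $(2,2)$ and $(3,3)$ blocks are purely $x$- and $u$-dependent.

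The hypotheses on $w$ only become usable after conjugating the difference by $\begin{bmatrix} I_n & A & B\end{bmatrix}$, which collapses $M_{\textup{int}} - M_{\textup{disc}}$ to $\int_0^T w_{(A,B)}w_{(A,B)}^\top dt - \delta W_\delta W_\delta^\top$ with $w_{(A,B)}=\dot x - Ax - Bu$; this is exactly the quantity Lemmas~\ref{lem:noise models} and~\ref{lem:BV noise models} control, and it is why the paper argues per system rather than at the level of the full $(2n+m)$-dimensional matrices. The intended route is: note that \eqref{eq:LMI stab} with the pair $(Q,\hat\beta)$ is literally the same inequality as \eqref{eq:LMI stab} with $(Q+\tfrac{1}{2}\delta TL I_n,\ \hat\beta-\tfrac{1}{2}\delta TL)$, i.e., the stability margin is traded for an enlarged noise bound; then invoke the inclusion $\calZ(N_\delta(Q))\cap\calM_{x,u}^L \subseteq \calZ(N_{\textup{cont}}(Q+\tfrac{1}{2}\delta TLI_n))$ (respectively with $\calN_{x,u}^L$) from Corollary~\ref{cor:set inclusions} to transfer the guarantee to the systems consistent with the sampled data. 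You should rebuild the argument on that compressed, set-inclusion level rather than on a block comparison of $M_{\textup{int}}$ and $M_{\textup{disc}}$.
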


The proof of this result leverages the margin of stability associated
to informative continuous-time data $(x,u)$ and follows from
Corollary~\ref{cor:set inclusions}.  As a consequence, we deduce that,
under the assumptions of Corollary~\ref{cor:how fine to sample}, there
\textit{always} exists a stepsize small enough to conclude quadratic
stabilization.

To draw conclusions regarding the true system using
Theorem~\ref{thm:lmi sampling implies cont}, we require that
$(A,B)\in\calM_{x,u}^L$ (resp. $(A,B)\in\calN_{x,u}^L$). The following
result identifies conditions to verify this on the basis of data.

\vspace{-0.3em}\begin{lemma}[Verifying the assumptions using data]
	\label{lem:SLC from data}
  Let the state $x:[0,T]\rightarrow\R^n$ and input
  $u:[0,T]\rightarrow\R^m$ trajectories satisfy
  Assumption~\ref{ass:state-input}. Suppose there exists
  $\lambda\geq 1$ such that
\vspace{-0.3em}  \begin{equation}\label{eq:bd AB}
    AA^\top +BB^\top < (\lambda-1)
    I_n,\vspace{-0.3em}
  \end{equation}
  for all $(A,B)\in\calZ(N_{\textup{cont}}(Q))$,
  \\
  (i) If 
  $\begin{pmatrix} \dot{x}^\top & -x^\top &
    -u^\top \end{pmatrix}^\top$ is $L$-square Lipschitz,~then
    \vspace{-0.3em}\begin{equation}\label{eq:ZinM}
      \calZ(N_{\textup{cont}}(Q))
      \subseteq \calM_{x,u}^{\lambda L}.\vspace{-0.3em}
    \end{equation}
    (ii) If
    $V_0^T \left(\begin{pmatrix} \dot{x}^\top & -x^\top &
        -u^\top \end{pmatrix}^\top\right)\leq \tfrac{1}{2}LT$, then
    \vspace{-0.3em}\begin{equation}\label{eq:ZinN}
      \calZ(N_{\textup{cont}}(Q)) \subseteq \calN_{x,u}^{\lambda L}.\vspace{-0.3em}
    \end{equation}
    Moreover, such $\lambda$ exists if and only if
  \vspace{-0.3em}\begin{equation}\label{eq:bounded int}
    \int_0^T \begin{pmatrix}
      x(t)\\u(t)\end{pmatrix}\begin{pmatrix}
      x(t)\\u(t)\end{pmatrix}^\top dt >0. \vspace{-0.3em}\end{equation}
\end{lemma}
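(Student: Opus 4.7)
The plan rests on the factorization $w(t) = M z(t)$ where $M := [\,I_n\;\;A\;\;B\,]$ and $z(t) := (\dot x(t)^\top, -x(t)^\top, -u(t)^\top)^\top$, so that $w(t)w(t)^\top = M z(t) z(t)^\top M^\top$ pointwise. Since $M M^\top = I_n + A A^\top + B B^\top$, hypothesis~\eqref{eq:bd AB} yields $\|M\|^2 = \|M M^\top\| < \lambda$ for every $(A,B) \in \calZ(N_{\textup{cont}}(Q))$. Parts (i) and (ii) will then follow quickly by pushing this bound through the two respective definitions.

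\textbf{Parts (i) and (ii).} For (i), I would fix $(A,B) \in \calZ(N_{\textup{cont}}(Q))$ and use submultiplicativity of the spectral norm to estimate
\[
\|w(t_1)w(t_1)^\top \!-\! w(t_2)w(t_2)^\top\| \leq \|M\|^2 \|z(t_1)z(t_1)^\top \!-\! z(t_2)z(t_2)^\top\| \leq \lambda L |t_1\!-\!t_2|,
\]
which gives~\eqref{eq:ZinM}. For (ii), the same factorization applied term-by-term inside an arbitrary partition $\pi \in \calP$ yields $\sum_i \|w(t_{i+1})w(t_{i+1})^\top - w(t_i)w(t_i)^\top\| \leq \|M\|^2 V_0^T(z) \leq \tfrac{1}{2}\lambda L T$, so passing to the supremum over $\pi$ establishes~\eqref{eq:ZinN}.

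\textbf{Equivalence with~\eqref{eq:bounded int}.} Setting $\xi := (x^\top, u^\top)^\top$, $\Theta := [A\;\;B]$, $\Psi := \int_0^T \dot x \xi^\top dt$, and $\Phi_\xi := \int_0^T \xi \xi^\top dt$, a direct expansion shows that $(A,B) \in \calZ(N_{\textup{cont}}(Q))$ is equivalent to the QMI $\Theta \Phi_\xi \Theta^\top - \Theta \Psi^\top - \Psi \Theta^\top \leq Q - \int_0^T \dot x \dot x^\top dt$. When $\Phi_\xi > 0$, completing the square in $\Theta$ confines it to an ellipsoid centered at $\Psi \Phi_\xi^{-1}$, so $A A^\top + B B^\top$ is uniformly bounded on $\calZ(N_{\textup{cont}}(Q))$ and any sufficiently large $\lambda$ satisfies~\eqref{eq:bd AB}.

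\textbf{Converse and main obstacle.} If instead $\Phi_\xi$ is singular, I would pick $0 \neq v \in \ker \Phi_\xi$; arguing as in the proof of Theorem~\ref{thm:cont stab}, $v^\top \xi(t) = 0$ for almost all $t$, and in particular $\Psi v = 0$ as well. Since the true system lies in $\calZ(N_{\textup{cont}}(Q))$ by Assumption~\ref{ass:state-input}, the set is non-empty; for any $(A_0, B_0)$ in it and any $\eta \in \R^n$, the perturbation $[A_0\;B_0] + \eta v^\top$ produces the same residual $w$ almost everywhere and therefore also belongs to $\calZ(N_{\textup{cont}}(Q))$. Taking $\|\eta\| \to \infty$ shows the set is unbounded, ruling out any $\lambda$. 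The main hurdle I anticipate is the bookkeeping in the square-completion step (identifying the correct ellipsoid with $\Theta$ of non-square shape $n \times (n+m)$); by contrast, parts (i)--(ii) and the converse direction essentially reduce to reusing tools already in the paper.
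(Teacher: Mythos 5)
Your proof is correct. For parts (i) and (ii) it coincides with the paper's argument: both rest on the factorization $w_{(A,B)} = Mz$ with $M = [\,I_n\;A\;B\,]$ and on the observation that~\eqref{eq:bd AB} is exactly $MM^\top < \lambda I_n$, i.e.\ $\norm{M}^2 < \lambda$; the paper writes the intermediate step as a matrix inequality $M(z_1z_1^\top - z_2z_2^\top)M^\top \leq \lambda(\cdots)$ before taking norms, whereas you pass to norms immediately via submultiplicativity, which is the cleaner phrasing of the same idea. Where you genuinely diverge is the final equivalence with~\eqref{eq:bounded int}. The paper treats~\eqref{eq:qmi bd AB} as a QMI to be implied by the data QMI and invokes the matrix S-lemma \cite[Cor.~4.13]{HJVW-MKC-JE-HLT:22}, reading off necessity of~\eqref{eq:bounded int} from the lower-right block of the resulting LMI and sufficiency by choosing $\alpha$ and $\lambda$ large. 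You instead argue directly: completing the square in $\Theta = [A\;B]$ when $\Phi_\xi = \int_0^T \xi\xi^\top dt > 0$ confines $\calZ(N_{\textup{cont}}(Q))$ to an ellipsoid (nonempty since the true system lies in it), hence $\Theta\Theta^\top$ is uniformly bounded and a large enough $\lambda$ works; and when $\Phi_\xi$ is singular, the perturbation $[A_0\;B_0] + \eta v^\top$ with $v \in \ker\Phi_\xi$ leaves the residual unchanged almost everywhere, so the set is unbounded and no $\lambda$ exists. Your route is more elementary and self-contained (no S-procedure machinery, and it exposes the geometric reason --- boundedness of the consistency set --- why~\eqref{eq:bounded int} is the right condition), while the paper's buys uniformity with the technique already used in Theorem~\ref{thm:cont stab} and would extend more readily to the generalized noise models of Remark~\ref{rem:assumption-noise-general}. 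The ``bookkeeping'' you flag as the main hurdle is indeed routine: $(\Theta - \Psi\Phi_\xi^{-1})\Phi_\xi(\Theta - \Psi\Phi_\xi^{-1})^\top \leq Q - \int_0^T \dot x\dot x^\top dt + \Psi\Phi_\xi^{-1}\Psi^\top$ gives the bound $\norm{\Theta - \Psi\Phi_\xi^{-1}}^2 \leq \norm{R}/\lambda_{\min}(\Phi_\xi)$ regardless of $\Theta$ being non-square.
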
 
\begin{proof} 
  To prove statements (i) and (ii), let
  $ w_{(A,B)}(t) := \dot{x}(t)-Ax(t)-Bu(t)$.  Then, to prove that
  \eqref{eq:ZinM} hold, we need to show that $w_{(A,B)}(t)$ is
  $\lambda L$-square Lipschitz continuous for all
  $(A,B)\in\calZ(N_{\textup{cont}}(Q))$. Similarly, \eqref{eq:ZinN} is
  equivalent to $V_0^T(w_{(A,B)})\leq \tfrac{1}{2}\lambda LT$ for all
  $(A,B)\in\calZ(N_{\textup{cont}}(Q))$.  Note that $\eqref{eq:bd AB}$
  is equivalent to 
  \begin{equation}\label{eq:qmi bd AB}
   \begin{bmatrix} I_n & A & B\end{bmatrix}\begin{bmatrix} I_n & A & B\end{bmatrix}^\top
    < \lambda I_n, 
  \end{equation}
  for all $(A,B)\in\calZ(N_{\textup{cont}}(Q))$. This implies that $\norm{ \begin{bmatrix} I_n & A & B\end{bmatrix} } \leq \sqrt{\lambda}$. Note that
  \small \begin{align*} 
  	&w_{(A,B)}(t_1)w_{(A,B)}(t_1)^\top -w_{(A,B)}(t_2)w_{(A,B)}(t_2)^\top\\
    =\begin{bmatrix}I_n \\ A^\top \\ B^\top\end{bmatrix}^\top\!\!\!\!
    &\left(\!\!
      \begin{pmatrix}\dot{x}(t_1)\\ -x(t_1)\! \\-u(t_1)\! \end{pmatrix}\!\!
    \begin{pmatrix}\dot{x}(t_1)\\ -x(t_1)\! \\-u(t_1)\! \end{pmatrix}^{\!\!\top}
    \!\!-\! 
    \begin{pmatrix}\dot{x}(t_2)\\ -x(t_2)\! \\-u(t_2)\! \end{pmatrix}\!\!
    \begin{pmatrix}\dot{x}(t_2)\\ -x(t_2)\! \\-u(t_2)\! \end{pmatrix}^{\!\!\top}
    \right)\!\!\begin{bmatrix} I_n \\ A^\top \\ B^\top \end{bmatrix} 
  \end{align*}\normalsize
  Taking the norm on both sides of this equality, using the fact that
  matrix norms are sub-multiplicative, and applying this to the
  definition of $L$-square Lipschitzness (resp. total square
  variation) yields (i) (resp. (ii)). To prove the last statement, we
apply \cite[Cor. 4.13]{HJVW-MKC-JE-HLT:22} to see that \eqref{eq:qmi
  bd AB} holds for all $(A,B)\in\calZ(N_{\textup{cont}}(Q))$ iff
 there exists $\alpha\geq 0$ and $\beta>0$ such that
\vspace{-0.3em} \[
    \begin{bmatrix} \! (\lambda \!-\! 1 \!-\! \beta)I_n \! -\! \alpha
      Q \!\!\!& 0& 0 \\ 0&\!\! -I_n \!\!&0 \\
      0&0&\!\!-I_n\end{bmatrix}\! +
    \alpha \!\! \int_0^T\!\!\begin{pmatrix} \dot{x}(t) \\ \!-x(t)\! \\
      \!-u(t)\!\end{pmatrix}\!\!\!\begin{pmatrix} \dot{x}(t) \\
      \!-x(t)\! \\ \!-u(t)\! \end{pmatrix}^\top\!\!\! dt \!\geq\!  0.
  \]
  Zooming in on the right-lower block, we see that this
  requires~\eqref{eq:bounded int}. Conversely, if~\eqref{eq:bounded
    int} holds, there exists $\alpha$ such that
  \[
    \alpha \int_0^T \begin{pmatrix}
      x(t)\\u(t)\end{pmatrix}\begin{pmatrix}
      x(t)\\u(t)\end{pmatrix}^\top dt \geq I_{2n}.
  \]
  Then, for large enough $\lambda \ge 1$, the LMI is
  satisfied.
\end{proof} 


The combination of Lemma~\ref{lem:SLC from data} and
Theorem~\ref{thm:lmi sampling implies cont} yields the following
result.

\begin{corollary}[Sufficient conditions for informativity]
  Suppose that the state $x:[0,T]\rightarrow\R^n$ and input
  $u:[0,T]\rightarrow\R^m$ trajectories satisfy
  Assumption~\ref{ass:state-input} and that \eqref{eq:bounded int}
  holds. Take $\lambda\geq 1$ such that $\eqref{eq:bd AB}$ holds.
  Assume there exists $K\in\R^{m\times n}$, $P\in \R^{n\times n}$, and
  $\beta>\tfrac{1}{2}\delta TL$ such that $P>0$ and \eqref{eq:LMI
    disc} holds. If either (i) the signal
  $\begin{pmatrix} \dot{x}^\top & -x^\top &
    -u^\top \end{pmatrix}^\top$ is $\tfrac{L}{\lambda}$-square
  Lipschitz, or (ii)
  $V_0^T \left(\begin{pmatrix} \dot{x}^\top & -x^\top &
      -u^\top \end{pmatrix}^\top\right)\leq
  \tfrac{1}{2}\tfrac{L}{\lambda}T$, then $(x,u)$ are informative for
  quadratic stabilization.
\end{corollary}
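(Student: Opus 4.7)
The plan is to derive the result as a direct consequence of the two main ingredients already developed: Lemma~\ref{lem:SLC from data}, which translates a regularity hypothesis on the measurement signal into a regularity property for the noise of \emph{every} consistent system, and Theorem~\ref{thm:lmi sampling implies cont}, which turns the sampled LMI with the margin $\beta>\tfrac{1}{2}\delta TL$ into a Lyapunov inequality for every continuous-time consistent system with the appropriate regularity. The strategy is to first convert the hypothesis into an inclusion of set-valued noise models, and then apply the sampled-data sufficient condition on this inclusion.

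First I would invoke Lemma~\ref{lem:SLC from data} with the quantity $L/\lambda$ in place of $L$. Under hypothesis (i), the signal $\begin{pmatrix}\dot{x}^\top & -x^\top & -u^\top\end{pmatrix}^\top$ is $(L/\lambda)$-square Lipschitz, so the lemma yields
\[
\calZ(N_{\textup{cont}}(Q)) \subseteq \calM_{x,u}^{\lambda \cdot (L/\lambda)} = \calM_{x,u}^{L}.
\]
Under hypothesis (ii), the analogous statement for total square variation yields $\calZ(N_{\textup{cont}}(Q)) \subseteq \calN_{x,u}^{L}$. The scaling by $\lambda$ is precisely the role of the bound~\eqref{eq:bd AB}: it amplifies the regularity of the measurement signal to cover the worst-case noise signal generated by any $(A,B)$ compatible with the data.

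Second, I would apply Theorem~\ref{thm:lmi sampling implies cont} with this very $L$. The hypotheses of that theorem are met by assumption: Assumption~\ref{ass:state-input} holds, the discrete-time LMI~\eqref{eq:LMI disc} is satisfied, and the strict inequality $\beta>\tfrac{1}{2}\delta TL$ is in force. Theorem~\ref{thm:lmi sampling implies cont} therefore guarantees that~\eqref{eq:Lyap} is satisfied for every $(A,B)\in \calZ(N_{\textup{cont}}(Q))\cap \calM_{x,u}^L$ and every $(A,B)\in \calZ(N_{\textup{cont}}(Q))\cap \calN_{x,u}^L$.

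Finally, combining the two steps closes the argument. Under (i) the inclusion $\calZ(N_{\textup{cont}}(Q)) \subseteq \calM_{x,u}^{L}$ implies $\calZ(N_{\textup{cont}}(Q))\cap \calM_{x,u}^L = \calZ(N_{\textup{cont}}(Q)) = \Sigma$; analogously under (ii). Hence~\eqref{eq:Lyap} holds for \emph{all} $(A,B)\in \Sigma$ with the common $K$ and $P>0$, which is exactly the definition of $(x,u)$ being informative for quadratic stabilization. There is no real obstacle here beyond checking that the $L$ chosen in Lemma~\ref{lem:SLC from data} matches the $L$ appearing in the threshold $\beta>\tfrac{1}{2}\delta TL$; taking $L/\lambda$-regularity of the measurement precisely calibrates the two constants so that the inclusion delivered by the lemma feeds directly into the hypothesis of the theorem.
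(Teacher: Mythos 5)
Your proposal is correct and follows exactly the route the paper intends: the paper presents this corollary as an immediate combination of Lemma~\ref{lem:SLC from data} (applied with $L/\lambda$ so that the resulting inclusion is into $\calM_{x,u}^{L}$ or $\calN_{x,u}^{L}$) and Theorem~\ref{thm:lmi sampling implies cont}, which is precisely what you do. The calibration of constants you highlight ($\lambda\cdot(L/\lambda)=L$ so that the lemma's conclusion matches the $L$ in the threshold $\beta>\tfrac{1}{2}\delta TL$) is the only point of care, and you handle it correctly.
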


\vspace{-1.2em}\subsection{Refining and coarsening sampled data}

Here we examine the impact of the stepsize on the informativity of
sampled data and its relationship with the informativity of the
continuous-time data.  As suggested by Corollary~\ref{cor:noise
  models}, decreasing the stepsize brings both notions of
informativity closer together. Instead, here we consider increasing
the stepsize and examine to what extent the number of samples can be
reduced while retaining informativity.

Let $\delta$ and $\gamma$ be stepsizes satisfying
$\tfrac{T}{\delta},\tfrac{T}{\gamma}\in\Z_{>0}$. Using the triangle
inequality and Lemma~\ref{lem:noise models}, we can conclude that, if
$w$ is $L$-square Lipschitz,
\[
  \norm{\delta W_\delta W_\delta^\top-\gamma W_\gamma W_\gamma^\top}
  \leq \tfrac{1}{2}(\delta+\gamma) TL.
\]
This result can be applied similarly to Corollary~\ref{cor:noise
  models} to obtain results comparing the respective noise models, in
turn linking their respective informativity properties.  However, if
we refine (resp., coarsen) the sampling by multiplying the stepsize
with a constant, we can obtain less conservative bounds.

\begin{lemma}[Bounds on noise model under different stepsizes]
  Let $w:[0,T]\rightarrow \R^n$ be $L$-square Lipschitz, $\delta$ and
  $\gamma$ such that $\gamma =(\ell+1)\delta$ with
  $\tfrac{T}{\delta},\tfrac{T}{\gamma}\in\Z_{>0}$ and
  $\ell\in\Z_{\geq0}$.~Then
  \[
    \norm{\delta W_\delta W_\delta^\top-\gamma W_\gamma W_\gamma^\top}
    \leq \tfrac{1}{2}(\gamma-\delta) TL = \tfrac{1}{2}\ell\delta TL.
  \] 
\end{lemma}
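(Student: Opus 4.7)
The plan is to rewrite both $\delta W_\delta W_\delta^\top$ and $\gamma W_\gamma W_\gamma^\top$ as sums indexed by the \emph{same} partition of $[0,T]$ into $T/\gamma$ blocks of length $\gamma=(\ell+1)\delta$, so that the difference telescopes into an easy-to-bound expression. Using $\gamma=(\ell+1)\delta$ and $T/\gamma\in\Z_{>0}$, I would write
\[
\delta W_\delta W_\delta^\top = \delta \sum_{j=0}^{T/\gamma-1}\sum_{i=0}^{\ell} w\bigl((j(\ell+1)+i)\delta\bigr)w\bigl((j(\ell+1)+i)\delta\bigr)^\top,
\]
and rewrite the coarser sum by distributing the weight $\gamma=(\ell+1)\delta$ of each $\gamma$-sample across the $\ell+1$ positions inside its block,
\[
\gamma W_\gamma W_\gamma^\top = \delta \sum_{j=0}^{T/\gamma-1}\sum_{i=0}^{\ell} w\bigl(j(\ell+1)\delta\bigr)w\bigl(j(\ell+1)\delta\bigr)^\top.
\]

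Next I would subtract the two expressions term by term and apply the triangle inequality together with the $L$-square Lipschitz hypothesis~\eqref{eq:noise Lipschitz}, noting that the time gap between the $i$-th and the $0$-th sample inside the $j$-th block is exactly $i\delta$. This yields
\[
\norm{\delta W_\delta W_\delta^\top - \gamma W_\gamma W_\gamma^\top} \leq \delta \sum_{j=0}^{T/\gamma-1}\sum_{i=0}^{\ell} L\, i\delta.
\]

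Finally I would evaluate the elementary sums: $\sum_{i=0}^{\ell} i = \tfrac{1}{2}\ell(\ell+1)$ and the outer sum has $T/\gamma = T/((\ell+1)\delta)$ terms, so the right-hand side equals $\tfrac{1}{2}L\delta^2\cdot \tfrac{T}{(\ell+1)\delta}\cdot \ell(\ell+1) = \tfrac{1}{2}\ell\delta T L = \tfrac{1}{2}(\gamma-\delta)TL$, which is the claimed bound.

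I do not foresee a substantive obstacle; the only delicate point is the index bookkeeping when aligning the two Riemann-like sums on a common partition. Everything else reduces to one application of square Lipschitzness per term and a trivial arithmetic-series computation. Compared with the cruder bound $\tfrac{1}{2}(\delta+\gamma)TL$ obtained from the triangle inequality against the continuous-time integral, the present argument is tighter precisely because both sums are compared directly, avoiding the $\tfrac{1}{2}\delta TL$ and $\tfrac{1}{2}\gamma TL$ errors of the two separate applications of Lemma~\ref{lem:noise models}.
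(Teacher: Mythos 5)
Your proposal is correct and matches the paper's proof essentially verbatim: both expand $W_\delta W_\delta^\top$ as a double sum over the $T/\gamma$ coarse blocks, write $(\ell+1)W_\gamma W_\gamma^\top$ by replicating each $\gamma$-sample $\ell+1$ times, bound each paired difference by $j\delta L$ via square Lipschitzness, and evaluate the arithmetic series to get $\tfrac{1}{2}(\gamma-\delta)TL$. Only the index labels differ.
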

\begin{proof} 
  Note that
 \vspace{-0.3em} \[
    W_\delta W_\delta^\top = \sum_{k=0}^{\sfrac{T}{\gamma}-1}
    \sum_{j=0}^{\ell} w(k\gamma+j\delta)w(k\gamma+j\delta)^\top.\vspace{-0.3em}
  \]
  On the other hand, we can expand
 \vspace{-0.3em} \[
    (\ell+1) W_\gamma W_\gamma^\top = \sum_{k=0}^{\sfrac{T}{\gamma}-1}
    \sum_{j=0}^{\ell} w(k\gamma)w(k\gamma)^\top.\vspace{-0.3em}
  \]
  Since  $w$ is $L$-square Lipschitz, 
  $ \norm{ w(k\gamma+j\delta)w(k\gamma+j\delta)^\top -
    w(k\gamma)w(k\gamma)^\top } \leq j\delta L$.  Combining the above,
  we get
  \[
    \norm{\delta W_\delta W_\delta^\top- \gamma W_\gamma
      W_\gamma^\top} \leq \delta\tfrac{T}{\gamma}
    \Big(\sum_{j=0}^{\ell} j\delta L \Big)=
    \tfrac{1}{2}(\gamma-\delta) TL,
  \]
  proving the result.
\end{proof}

This result allows us to link properties of the noise models under
different sampling rates.

\begin{corollary}[Relations between noise models with different
  stepsizes]
  Let $w:[0,T]\rightarrow \R^n$ be $L$-square Lipschitz, $\delta$ and
  $\gamma$ such that $\gamma =(\ell+1)\delta$ with
  $\tfrac{T}{\delta},\tfrac{T}{\gamma}\in\Z_{>0}$, and
  $\ell\in\Z_{\geq0}$.~Then
  \begin{align*}
    \delta W_\delta W_\delta^\top \leq Q
    &\Rightarrow \gamma W_\gamma
      W_\gamma^\top \leq Q +
      \tfrac{1}{2}(\gamma-\delta)
      TLI_n,
    \\		
    \gamma W_\gamma W_\gamma^\top\leq Q
    &\Rightarrow \delta W_\delta
      W_\delta^\top  \leq Q +
      \tfrac{1}{2}(\gamma-\delta)
      TLI_n.	 
  \end{align*} 
\end{corollary}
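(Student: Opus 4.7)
The plan is to obtain both implications as an almost immediate consequence of the preceding lemma, which asserts
\[
  \norm{\delta W_\delta W_\delta^\top - \gamma W_\gamma W_\gamma^\top} \leq \tfrac{1}{2}(\gamma-\delta)TL.
\]
The key observation is that both $\delta W_\delta W_\delta^\top$ and $\gamma W_\gamma W_\gamma^\top$ are symmetric positive semidefinite matrices, so an induced Euclidean norm bound on their difference translates directly into a two-sided Loewner order bound.

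First, I would invoke the elementary fact that for any symmetric matrix $S \in \R^{n\times n}$, one has $-\norm{S} I_n \leq S \leq \norm{S} I_n$ in the Loewner order, since $\norm{S}$ coincides with the spectral radius. Applying this to the symmetric matrix $S := \gamma W_\gamma W_\gamma^\top - \delta W_\delta W_\delta^\top$ and setting $c := \tfrac{1}{2}(\gamma-\delta)TL$ yields the two-sided bound
\[
  -c I_n \leq \gamma W_\gamma W_\gamma^\top - \delta W_\delta W_\delta^\top \leq c I_n.
\]

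From here, the first implication is immediate: assuming $\delta W_\delta W_\delta^\top \leq Q$, one rearranges the upper half of the bound as $\gamma W_\gamma W_\gamma^\top \leq \delta W_\delta W_\delta^\top + c I_n \leq Q + c I_n$. The second implication follows symmetrically by rearranging the lower half and using $\gamma W_\gamma W_\gamma^\top \leq Q$.

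There is no substantive obstacle here, as the argument is a direct analogue of how Corollary~\ref{cor:noise models} is derived from Lemmas~\ref{lem:noise models} and~\ref{lem:BV noise models}; the only thing worth being careful about is explicitly recording the symmetry of the matrices involved, since the passage from operator norm bounds to Loewner inequalities requires this. The proof can therefore be kept to a few lines.
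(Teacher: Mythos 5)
Your proof is correct and matches the paper's (implicit) argument: the corollary is stated there without proof precisely because it follows from the preceding lemma in the way you describe, namely by converting the spectral-norm bound on the symmetric difference $\gamma W_\gamma W_\gamma^\top - \delta W_\delta W_\delta^\top$ into the two-sided Loewner bound $\pm\tfrac{1}{2}(\gamma-\delta)TL\, I_n$ and rearranging. Your explicit note that symmetry is what licenses the passage from $\norm{S}$ to $-\norm{S}I_n \leq S \leq \norm{S}I_n$ is a worthwhile detail the paper leaves unstated.
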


We are now ready to provide a criterion to increase the sampling
stepsize without losing informativity.

\begin{theorem}[Coarsening measurements]\label{th:coarsening}
  Consider state $x:[0,T]\rightarrow\R^n$ and input
  $u:[0,T]\rightarrow\R^m$ trajectories such that
  Assumption~\ref{ass:state-input} holds. Assume the corresponding
  noise signal $w$ is $L$-square Lipschitz. Suppose that the data
  $(\dot{X}_\delta,X_\delta,U_\delta)$ are informative for
  continuous-time quadratic stabilization and let $\hat{\beta}$ the
  largest $\beta>0$ such that there exists $K\in\R^{m\times n}$,
  $P\in \R^{n\times n}$, with $P>0$ and \eqref{eq:LMI disc}. Then, the
  data $(\dot{X}_\gamma,X_\gamma,U_\gamma)$ are informative for
  continuous-time quadratic stabilization for
  $\gamma =(\ell+1)\delta$, with
  $\ell< \tfrac{2}{\delta TL}\hat{\beta}$.
\end{theorem}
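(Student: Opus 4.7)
The plan is to translate the informativity hypothesis at stepsize $\delta$ into feasibility of~\eqref{eq:LMI disc} via Theorem~\ref{thm:disc stab}, perturb the sample-based outer products to the coarser stepsize $\gamma$, and re-interpret the resulting feasibility as informativity at $\gamma$. Informally, if one sacrifices a small portion of the Lyapunov margin $\hat\beta$, the coarsened samples stay close enough (by the square-Lipschitz regularity) to the refined ones to preserve LMI feasibility.

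Concretely, let $(K,P)$ attain the maximal margin $\hat\beta$ in~\eqref{eq:LMI disc} at stepsize $\delta$, and set $\beta' := \hat\beta - \tfrac{1}{2}\ell\delta TL$, which is strictly positive by the hypothesis $\ell<\tfrac{2\hat\beta}{\delta TL}$. Abbreviate $\Phi_s := [\dot X_s^\top,\,-X_s^\top,\,-U_s^\top]^\top$, and let $E_{11}$ denote the $(2n+m)\times(2n+m)$ matrix with $I_n$ in its top-left block and zeros elsewhere. A direct algebraic rearrangement shows that the left-hand side of~\eqref{eq:LMI disc} at stepsize $\gamma$ with margin $\beta'$ equals the left-hand side at stepsize $\delta$ with margin $\hat\beta$, plus the correction
\[
(\hat\beta - \beta')\,E_{11} \;+\; \bigl(\gamma\,\Phi_\gamma\Phi_\gamma^\top \;-\; \delta\,\Phi_\delta\Phi_\delta^\top\bigr).
\]
The first summand is PSD by hypothesis, so the task reduces to showing that this correction is PSD. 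Theorem~\ref{thm:disc stab} applied in the converse direction will then deliver informativity of $(\dot X_\gamma, X_\gamma, U_\gamma)$. The quantitative heart of the argument is the spectral-norm bound $\|\gamma\Phi_\gamma\Phi_\gamma^\top - \delta\Phi_\delta\Phi_\delta^\top\| \leq \tfrac{1}{2}\ell\delta TL$, which is obtained via the telescoping argument of the Lemma on bounds between stepsizes immediately preceding the theorem, but applied to the enlarged signal $\phi := (\dot x^\top, -x^\top, -u^\top)^\top$ in place of $w$. The square-Lipschitz regularity needed for $\phi$ is inherited from that of $w$ via $\dot x = A_s x + B_s u + w$ together with a bound on $(A_s, B_s)$ of the form~\eqref{eq:bd AB}, possibly after absorbing a constant into~$L$.

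The main obstacle is reconciling the block structure of the two correction terms: $(\hat\beta-\beta')E_{11}$ contributes positive definiteness only to the top-left $n\times n$ block, whereas $\gamma\Phi_\gamma\Phi_\gamma^\top - \delta\Phi_\delta\Phi_\delta^\top$ is a full $(2n+m)\times(2n+m)$ indefinite perturbation. A naive spectral-norm comparison therefore does not immediately close the block-matrix inequality. Closing the gap requires either exploiting the maximality of $\hat\beta$ (so that at any strictly smaller margin the LMI at stepsize $\delta$ retains slack in the off-diagonal blocks capable of absorbing the off-block part of the sample discrepancy), or bypassing the LMI manipulation entirely and arguing via the set inclusion $\Sigma^\gamma \subseteq \calZ(N_\delta(Q + \tfrac{1}{2}\ell\delta TL\, I_n))$, in the spirit of Corollary~\ref{cor:set inclusions}, which transfers stabilizability at $\delta$ to stabilizability at $\gamma$ directly at the level of Lyapunov margins.
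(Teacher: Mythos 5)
Your proposal assembles the right ingredients (the telescoping stepsize bound, the margin $\hat\beta$, Theorem~\ref{thm:disc stab}), but your main line of argument does not close, and the correct route appears only as an unexecuted fallback at the end. The paper itself states this theorem without an explicit proof; the intended argument is precisely the set-inclusion route, analogous to Corollary~\ref{cor:how fine to sample}. Two things break in your primary route. First, you apply the stepsize-comparison lemma to the stacked signal $\phi=(\dot x^\top,-x^\top,-u^\top)^\top$, but nothing in the hypotheses makes $\phi$ square Lipschitz: $u$ is only square integrable, so $\Phi_\gamma$ and $\Phi_\delta$ may differ arbitrarily in the input blocks, and $\dot x=A_sx+B_su+w$ transmits that irregularity to $\dot x$; the regularity of $w$ does not propagate upward to $\phi$ (your appeal to~\eqref{eq:bd AB} runs the implication in the wrong direction). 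Second, even granting a bound $\|\gamma\Phi_\gamma\Phi_\gamma^\top-\delta\Phi_\delta\Phi_\delta^\top\|\le\tfrac12\ell\delta TL$, this does not make $(\hat\beta-\beta')E_{11}+\gamma\Phi_\gamma\Phi_\gamma^\top-\delta\Phi_\delta\Phi_\delta^\top$ positive semidefinite, since the discrepancy can be negative on the lower-right $(n+m)\times(n+m)$ blocks where $E_{11}$ contributes nothing --- the very obstruction you name. Your first proposed rescue (off-diagonal slack from the maximality of $\hat\beta$) does not help: maximality only concerns the scalar perturbation of the top-left block and guarantees no slack elsewhere.

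The missing idea is to never compare the full $(2n+m)$-dimensional matrices. Conjugating by $\begin{bmatrix}I_n&A&B\end{bmatrix}$ turns the discrepancy into $\gamma W_\gamma W_\gamma^\top-\delta W_\delta W_\delta^\top$ with $W_s=\dot X_s-AX_s-BU_s$ the sampled residual --- an $n\times n$ quantity controlled by square Lipschitzness and landing exactly in the block the margin can absorb. Concretely, the identity $N_s(Q+cI_n)=N_s(Q)+cE_{11}$ together with the corollary on noise models under different stepsizes yields $\calZ(N_\gamma(Q))\subseteq\calZ(N_\delta(Q+\tfrac12\ell\delta TLI_n))=\calZ(N_\delta(Q)+\tfrac12\ell\delta TLE_{11})$, while feasibility of~\eqref{eq:LMI disc} at $\delta$ with margin $\hat\beta$ reads $M-N_\delta(Q)\ge\hat\beta E_{11}$ (with $M$ as in the proof of Theorem~\ref{thm:cont stab}), hence $M-N_\delta(Q+\tfrac12\ell\delta TLI_n)\ge(\hat\beta-\tfrac12\ell\delta TL)E_{11}$ with $\hat\beta-\tfrac12\ell\delta TL>0$ by the hypothesis on $\ell$; Theorem~\ref{thm:disc stab} applied with the inflated $Q$ then stabilizes the larger set and a fortiori $\Sigma^\gamma$. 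One caveat worth flagging (it concerns the theorem statement as much as any proof): the stepsize-comparison corollary must be applied to the residual $\dot x-Ax-Bu$ of \emph{every} $(A,B)\in\Sigma^\gamma$, whereas only the true noise $w$ is assumed $L$-square Lipschitz, so strictly one should either intersect with $\calM_{x,u}^L$ or import the uniform regularity via Lemma~\ref{lem:SLC from data}.
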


Note that, under the assumptions of Theorem~\ref{th:coarsening}, the
samples $(\dot{X}_\gamma,X_\gamma,U_\gamma)$ are contained in those of
$(\dot{X}_\delta,X_\delta,U_\delta)$. This means that, given
informative data, the result allows to find a subset of it which
remains informative. In particular, to determine continuous-time
quadratic stabilization, we can draw conclusions from data that
contains $\ell$ times less samples.  One can derive similar results
for the case of noise with bounded total variation, but we omit them
for brevity.

\vspace{-0.8em}\section{Scalar system with square Lipschitz
  noise}\label{sec:example} To visualize the results and show
  that the effects described are important to take into account, we
  provide a simple example. We show here that the nontrivial effects
  of sampling arise even for a scalar system with well-behaved noise
  and input signals. Consider the scalar linear system
\[
  \dot{{x}}(t) = - {x}(t) +\tfrac{1}{10} {u}(t) +{w}(t),
\] 
with initial condition $x(0)=1$. The time horizon is $T=1$.  We
consider noise signals of the form \eqref{eq:noise model} with
$Q=1$. We excite the system with a uniform input $u(t) = 1$ and the
(piecewise linear) noise signal
\[
  w(t) = \max\{ 0, 2-4t\} = \begin{cases} 0 & t\leq \tfrac{1}{2} \\
    2-4t & t> \tfrac{1}{2} \end{cases}.
\]

It is straightforward to show that
$ \int_{0}^1 w(t)^2 dt = \tfrac{2}{3}\leq 1$, and that $w$ is
16-square Lipschitz. Solving for the dynamics yields
\[
  x(t) = \begin{cases} \tfrac{1}{10}e^{-t}(9+e^t)  & t\leq
    \tfrac{1}{2}\\ \tfrac{1}{10}e^{-t}(9-40\sqrt{e}+e^t(61-40t) &
    t>\tfrac{1}{2} \end{cases}.
\]
Figure~\ref{fig:state+derivative} shows the signal $x$ and its
derivative.

\begin{figure}[t]
  \begin{tikzpicture}[scale=0.55]
    \begin{axis}[axis lines = left]
      \addplot [domain=0:0.5, samples=100, color=blue	]
      {(1/10)*exp(-x)*(9+exp(x))};
      \addplot [domain=0.5:1, samples=100, color=blue	]
      {(1/10)*exp(-x)*(9-40*sqrt(e)+exp(x)*(61-40*x))};
    \end{axis}
  \end{tikzpicture}\!\!\!\!\!\!
  \begin{tikzpicture}[scale=0.55]
    \begin{axis}[axis lines = left, ymin=-2, ymax=0	]
      \addplot [domain=0:0.5, samples=100, color=red]
      {-(1/10)*exp(-x)*9};
      \addplot [domain=0.5:1, samples=100, color=red]
      {-4+4*exp(1/2-x)-(1/10)*exp(-x)*9};
    \end{axis}
  \end{tikzpicture}\hspace{-1.1cm}
  \caption{Measured state $x(t)$ (left) and derivative $\dot{x}(t)$
    (right) signals. These, along with $u(t)=1$, are the
    continuous-time data considered in Section~\ref{sec:example}.}
  \label{fig:state+derivative}
  \vspace{-1.5em}
\end{figure}
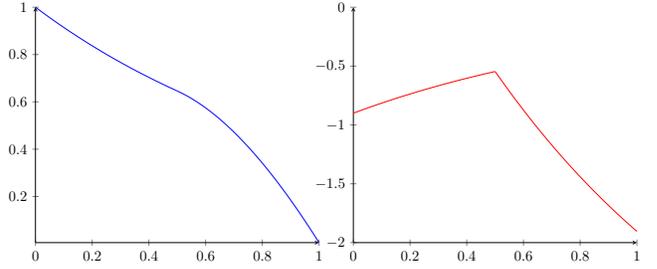

\begin{figure*}[t!]
  \includegraphics[width=.24\linewidth]{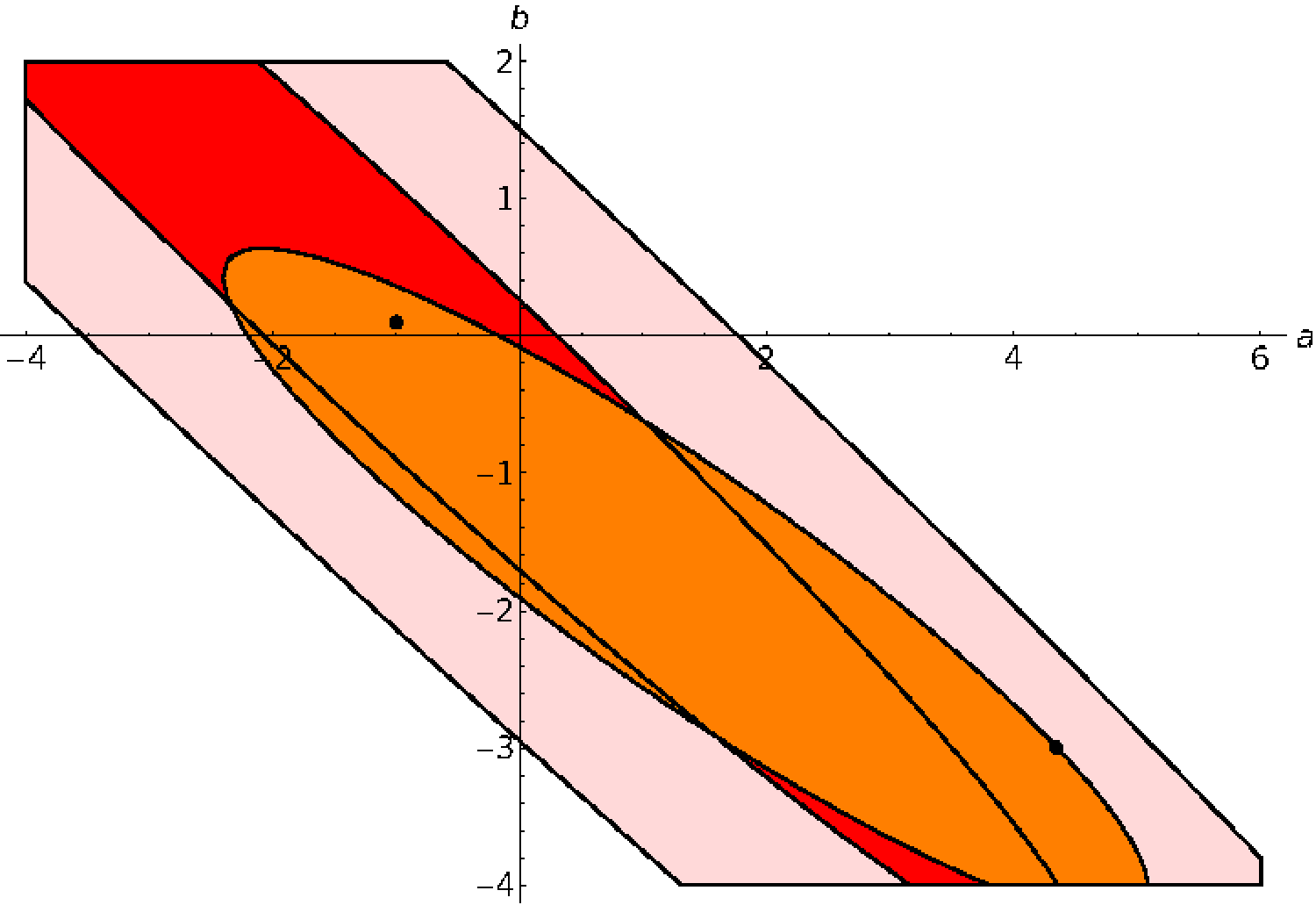}
  \includegraphics[width=.24\linewidth]{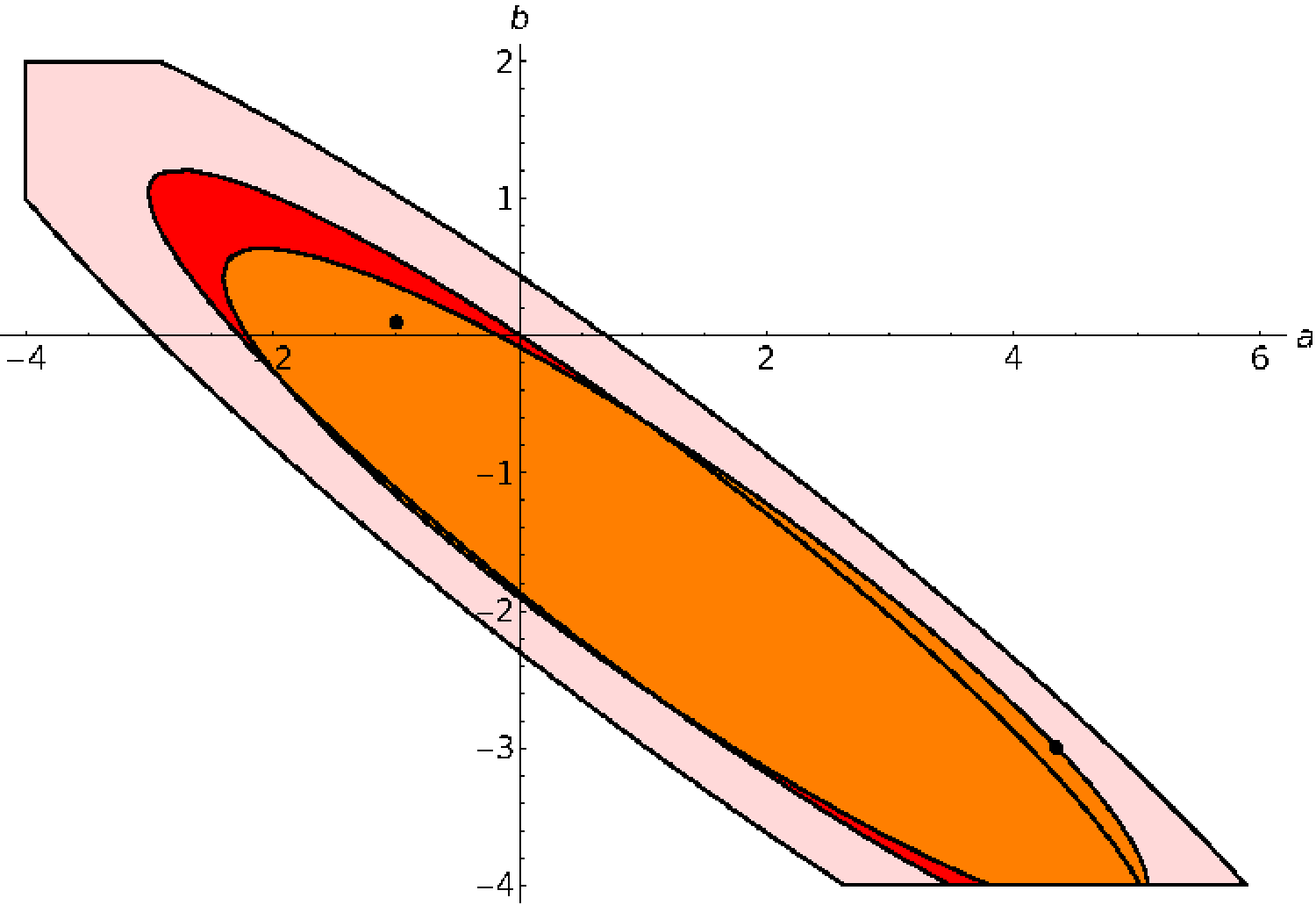}
  \includegraphics[width=.24\linewidth]{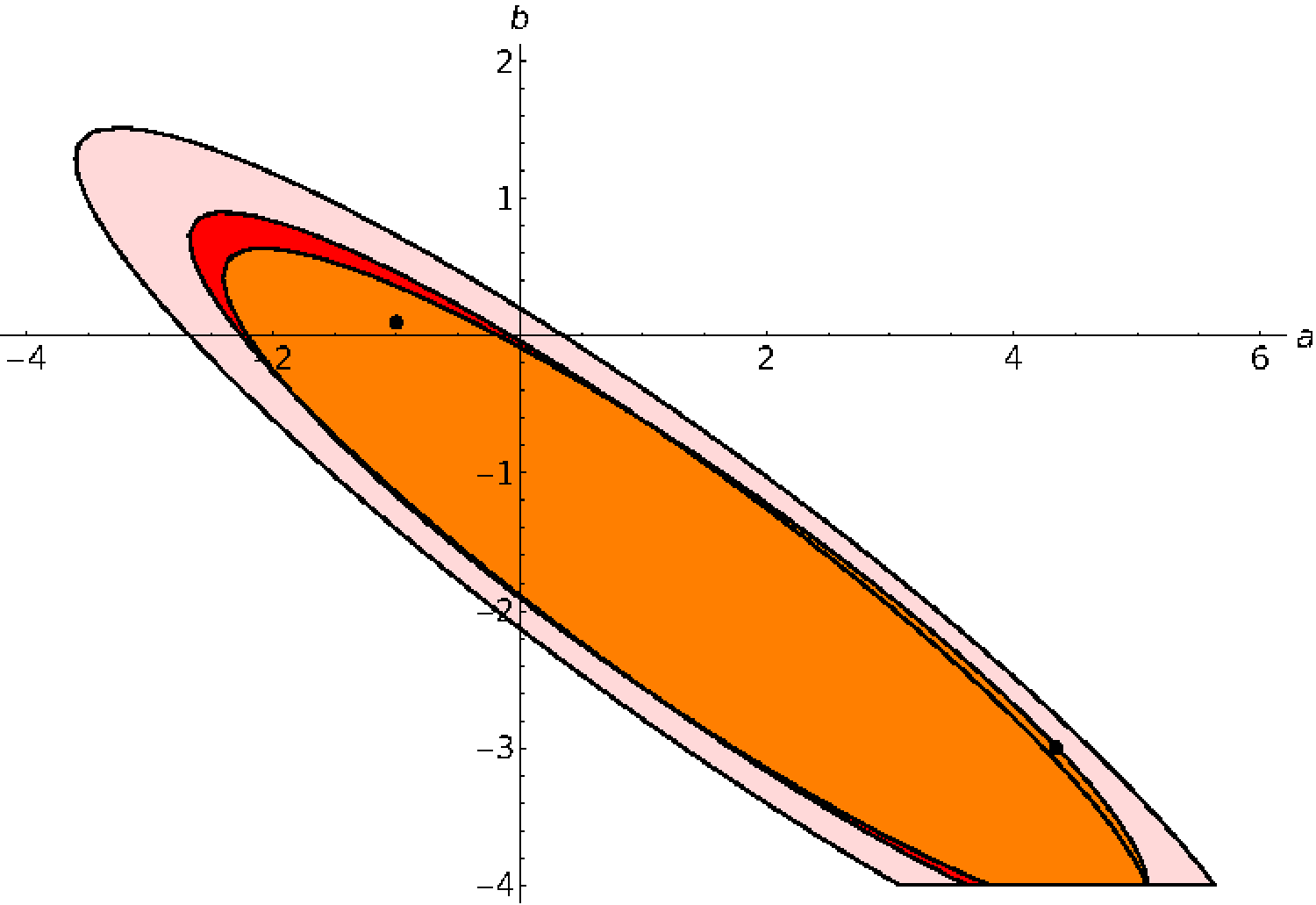}
  \includegraphics[width=.24\linewidth]{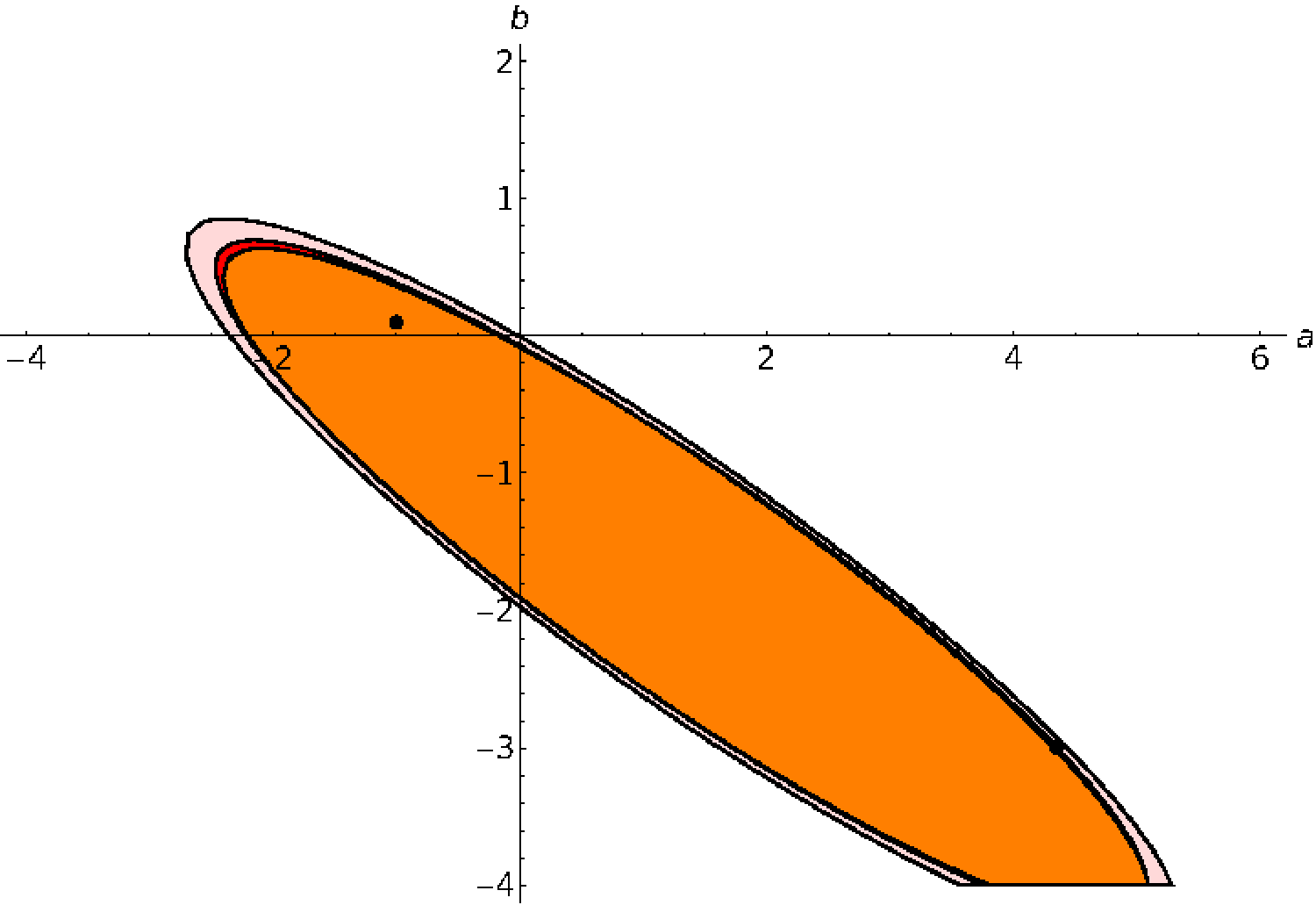}
  \caption{The sets of systems $(a,b)$ that are compatible with the
    measurements and with sampled data. In orange the set
    $\calZ(N_{\textup{cont}}(1))$. In red the set
    $\calZ\left(N_\delta(1)\right)$ for $\delta=\tfrac{1}{2}$,
    $\tfrac{1}{8}$, $\tfrac{1}{16}$, and $\tfrac{1}{64}$ from left to
    right. In light red the set
    $\calZ\left(N_\delta(1+\tfrac{1}{2}\delta TL)\right)$ for the same
    values of $\delta$, $T=1$, and $L=16$. The black dots denotes the
    true system $(-1,\tfrac{1}{10})$ and the (indistinguishable on the basis of the measurements) system $(4.35,-3)$.}\label{fig:level sets}
\end{figure*}	

Any system, given in terms of state and input matrices $(a,b)$, is
compatible with the measurements if and only if
$(a,b) \in \calZ(N_{\textup{cont}}(1))\cap \calM_{x,u}^{16}$, where
$N_{\textup{cont}}(1)$ is given in \eqref{eq:N noise}. Calculating the
relevant integrals yields
\[
  N_{\textup{cont}}(1) \approx \begin{bmatrix} -0.154 & -0.500 &
    -0.995 \\ -0.500 & - 0.422 & - 0.595\\ -0.995 &- 0.595&-
    1\end{bmatrix}.
\]
Now, note that for $P=\tfrac{1}{2}>0$, $K=2$, and
$\beta =\frac{1}{10}$, the LMI~\eqref{eq:LMI stab} holds. Using
Theorem~\ref{thm:cont stab}, this allows us to conclude that the data
$(x,u)$ is informative for quadratic stabilization. Indeed, the true,
measured system is stabilized by $K=2$.

Next, we turn our attention to sampling the data. We take $\delta$
equal to $2^{-i}$, for $i=1,\ldots 6$, and show the corresponding
matrices $N_\delta(1)$ in~\eqref{eq:N matrices example}.
\begin{figure*}
 	\vspace{-1.5em}
  \footnotesize
  \begin{subequations}\label{eq:N matrices example}
    \begin{align} 
      &N_{\tfrac{1}{2}}(1) \!\! &\approx \begin{bmatrix}0.446&-0.626&-0.723\\-0.626&-0.709&-0.823\\-0.723&-0.823&-1 \end{bmatrix}, \hspace{1em}
                                                            &N_{\tfrac{1}{4}}(1) \!\!&\approx \begin{bmatrix}0.171&-0.588&-0.864\\-0.588&-0.557&-0.714\\-0.864&-0.714&-1 \end{bmatrix}, \hspace{1em}
                                                            &N_{\tfrac{1}{8}}(1) \!\!&\approx \begin{bmatrix}0.0152&-0.550&-0.931\\-0.550&-0.487&-0.656\\-0.931&-0.656&-1 \end{bmatrix}, \\
      &N_{\tfrac{1}{16}}(1) \!\!&\approx \begin{bmatrix}-0.068&-0.526&-0.963\\-0.526&-0.454&-0.626\\-0.963&-0.626&-1\end{bmatrix},\hspace{1em}
                                                            &N_{\tfrac{1}{32}}(1) \!\!&\approx \begin{bmatrix}-0.111&-0.514&-0.979\\-0.514&-0.438&-0.610\\-0.979&-0.610&-1\end{bmatrix},\hspace{1em}
                                                            &N_{\tfrac{1}{64}}(1) \!\!&\approx \begin{bmatrix}-0.132&-0.507&-0.987\\-0.507&-0.430&-0.603\\-0.987&-0.603&-1 \end{bmatrix}.
		\end{align}
	\end{subequations}
	\hrulefill 
	\vspace{-2.5em}
\end{figure*}
We first consider whether the samples are informative for
continuous-time quadratic stabilization. Note that, for each
$i\leq 3$, the left-upper block of $N_{2^{-i}}(1)$ is greater than
0. This implies that $(0,0)\in\calZ(N_{2^{-i}}(1))$, and therefore the
data cannot be informative for continuous-time quadratic
stabilization. Figure~\ref{fig:level sets} illustrates this, showing
the sets of systems consistent with the continuous measurements and
with sampled data for $\delta=\tfrac{1}{2}$, $\tfrac{1}{8}$,
$\tfrac{1}{16}$, and $\tfrac{1}{64}$.  Using Matlab with YALMIP
\cite{JL:04} and MOSEK, we can check the conditions in
Theorem~\ref{thm:disc stab} for different values of $\delta$. This
yields that the data are informative for continuous-time quadratic
stabilization for $\delta=\tfrac{1}{16}$ and smaller values.  As
argued above, this does not yet allow us to conclude that the
continuous-time measurements are informative for quadratic
stabilization of the true system on the basis of sampled data. To
illustrate this, recall that, on the basis of the measurements
$(x,u)$, we cannot distinguish the true system from any of those in
$\calZ(N_{\textup{cont}}(1))$. In Figure~\ref{fig:level sets}, we see
that the system $(4.35,-3)$, for example, is compatible with the
continuous measurements, but $(4.35,-3) \not \in \calZ \left(N_\delta(1)\right)$. This shows
that even if all systems in $\calZ\left(N_\delta(1)\right)$ can be
stabilized, this does not imply that the measurements $(x,u)$ are
informative for quadratic stabilizability.

To determine for the stepsizes for which sampled versions of the
continuous-time measurements are informative for quadratic
stabilization of the true system, we employ the additional knowledge
on the noise signal and resort to Theorem~\ref{thm:lmi sampling
  implies cont}.  In this case, the fact that $w$ is $L$-square
Lipschitz with $L=16$ (alternatively, a more conservative bound for
$L$ could be obtained from Lemma~\ref{lem:SLC from data}).
Note in particular that the set inclusions displayed in
Figure~\ref{fig:level sets}, where $\calZ(N_{\textup{cont}}(1))$ is
contained in each of the sets
$\calZ\left(N_\delta(1+\tfrac{1}{2}\delta TL)\right)$, are consistent
with~\eqref{eq:inclusions-Lipschitz} in Corollary~\ref{cor:set
  inclusions}.

Using Matlab, we verify that the required LMI of Theorem~\ref{thm:lmi
  sampling implies cont} is feasible for $\delta=\tfrac{1}{64}$. This
guarantees the existence of a stabilizing feedback $K$ for all systems
$(a,b)\in \calZ(N_{\textup{cont}}(1)) \cap \calM_{x,u}^L$ on the basis
of sampled data with $\delta=\tfrac{1}{64}$. This is consistent with
the bound for the stepsize obtained in Corollary~\ref{cor:how fine to
  sample}, which guarantees samples from the continuous-time signals
are informative for
$\delta<\tfrac{1}{16}\hat{\beta}\approx 0.0096 \approx
\tfrac{1}{104}$.

\vspace{-0.5em}
\section{Conclusions}\label{sec:conc}

We have studied the informativity problem for continuous-time signals
and systems. We first characterized when continuous-time data is
informative for continuous-time stabilization and then focused on
understanding the informativity of sampled data.  After motivating the
need for additional assumptions on the noise signal, we have
introduced the notions of square Lipschitzness and bounded total
square variation.  Under these noise models, we have provided
sufficient conditions for stabilizability properties of the set of
systems compatible with the continuous-time measurements on the basis
of sampled data and characterized the role of the sampling
stepsize. These results provide a stepping stone towards a full
  treatment of continuous systems on the basis of samples. Future
  research will include the investigation of necessary conditions, the
  effect of the estimation of derivative from state samples, the study
of informativity under other noise models, and the generalization of
our results to problems beyond stabilization like $\calH_2$ and
$\calH_\infty$ performance.

\bibliography{../bib/alias,../bib/JC,../bib/Main,../bib/Main-add,../bib/New,../bib/FB}


\bibliographystyle{IEEEtran}

\end{document}